 \newtheorem{theorem}{Theorem}[section]
 \newtheorem{definition}[theorem]{Definition}
 \newtheorem{remark}[theorem]{Remark}
 \newtheorem{cor}[theorem]{Corollary}
 \newtheorem{pro}[theorem]{Proposition}
\title{ On global weak solutions to the Cauchy problem for the Navier-Stokes equations with large $L_3$-initial data}
 \author{ G. Seregin
 \footnote{The paper is supported in parts by RFBR project 14-01-00306.}, V. Sverak\footnote{The research was supported in part by grants DMS 1362467 and DMS 1159376 from the National Science Foundation.}
 }
 \newenvironment{dedication}
        {\vspace{6ex}\begin{quotation}\begin{center}\begin{em}}
        {\par\end{em}\end{center}\end{quotation}}
\begin{document}
\maketitle
\abstract{The aim of the note is to discuss different definitions of solutions to the Cauchy problem for the Navier-Stokes equations with the initial data belonging to the Lebesgue space $L_3(\mathbb R^3)$}
\begin{dedication}
\vspace*{0.1cm}
{Dedicated to Professor Nicola Fusco on the occasion of his 60th birthday.}
\end{dedication}\setcounter{equation}{0}

\section{Introduction}


 We consider the classical Cauchy problem for the Navier-Stokes system, describing the flow of a viscous incompressible fluid:
\begin{equation}\label{system}
\left.\begin{array}{rcl}
\partial_tv+v\cdot\nabla v -\Delta v & = & \!\!-\nabla q \\
                          \mbox{div}\,v & = &\,\, \,\,\,\,0
\end{array}\right\}
\qquad \hbox{ in $Q_\infty=\mathbb R^3\times ]0,\infty[$\,\,,}
\end{equation}
with
\begin{equation}\label{bc}
|v(x,t)|\to0\,,\qquad t>0\,,\,|x|\to\infty
\end{equation}
and
\begin{equation}\label{ic}
v(\,\cdot\,,0)=v_0(\,\cdot\,)\in L_3(\mathbb R^3)\end{equation}
where ${\rm div}\, v_0=0$.

There are essentially two methods for constructing the solutions: the perturbation theory and the energy method. In the first approach, we treat the non-linear term as a perturbation and try to find the best spaces in which such treatment is possible. The scaling symmetry of the equation
\begin{equation}\label{vl1}
\begin{array}{c}v(x,t) \\ q(x,t)\end{array}
 \quad\to\quad \begin{array}{c} v_\lambda (x,t)\,=\lambda\,v(\lambda x,\lambda^2 t)\\\,\,\,q_\lambda(x,t) =\lambda^{\!2} q(\lambda x, \lambda^2 t)
 \end{array}
\end{equation}
plays an important r\^ole in the choice of the function spaces, with the scale-invariant spaces  being at the borderline of various families of spaces for which the method works. The most general result in this direction is due to Koch and Tataru~\cite{KochTataru}. The choice of $L_3(\mathbb R^3)$ in~(\ref{ic}) represents a well-known simple example of such a border-line space. The perturbation method cannot work for $L_{3-\delta} (\mathbb R^3)$ for any $\delta >0$.
The perturbation approach goes back to the papers of Oseen and Leray~\cite{Oseen1911, Le}, but in the context of the scale-invariant spaces it was pioneered by Kato~\cite{Kato}.

The energy method is based on the natural {\it  a-priori} energy estimate
\begin{equation}\label{vl2}
\int_{\mathbb R^3} |v(x,t)|^2\,dx +\int_0^t\int_{\mathbb R^3} 2|\nabla v(x,t')|^2\,dx\,dt'\le \int_{\mathbb R^3} |v_0(x)|^2\,dx\,,
\end{equation}
and was pioneered by Leray in~\cite{Le}. The natural condition on the initial data in the context of the energy method is $v_0\in L_2(\mathbb R^3)$. The energy method gives global weak solutions for any initial data in $L_2$, but the regularity and, more importantly, uniqueness of the solutions is unknown, and possibly does not hold, see~\cite{JiaSverak}.

In many cases it is desirable to have a good theory of the weak solution for initial data $v_0\in L_3(\mathbb R^3)$, but the original theory of the weak solutions, which needs $v_0\in L_2(\mathbb R^3)$, does not cover that case. Various approaches have been developed to adapt the theory of the weak solutions so that it would allow $v_0\in L_3(\mathbb R^3)$.
For example, in the paper of Calderon~\cite{Calderon} the author decomposes an $L_3$ initial data $v_0$ as
\begin{equation}\label{vl3}
v_0=v_0^1+v_0^2
\end{equation}
so that $v_0^1$ is small in $L_3$ and $v_0^2$ belongs to $L_2\cap L_3$. Due to the smallness, the initial data $v_0^1$ generates a global smooth solution $v^1$ by perturbation theory, and we can write down the equation for $v^2=v-v^1$ and solve it via the energy method.

A more general approach, to be discussed in some detail below, was developed by Lemarie-Rieusset, see~\cite{LR1}.

Here we consider another method for constructing global weak solutions for $v_0\in L_3(\mathbb R^3)$. The method is very simple and, moreover, is easily extendable to problems in unbounded domains with boundaries. The method of Calderon probably also allows such extensions quite easily, whereas the extension of the (more general) concepts from~\cite{LR1} does not appear to be straighforward.

The main idea is as follows. Let $v^1$ be the solution of the linear version of our problem (obtained from the original system simply by omitting the non-linear term).
We now seek the solution $v$ of the original non-linear problem as
\begin{equation}\label{vl4}
v=v^1+v^2\,.
\end{equation}
It is easy to see that the ``correction" $v^2$ should be in the energy class. The ``first correction" $v^{21}$ is given by
\begin{equation}\label{vl5}
v^{21}_t-\Delta v^{21} + \nabla q^{21}= - {\rm div\,} v^1\otimes v^1\,.
\end{equation}
We have
\begin{equation}\label{vl6}
v^1\otimes v^1 \in L_{\infty}([0,\infty[\,\,,L_{\frac 32}(\mathbb R^3)) \cap  L_{\frac 52}([0,\infty[\,\,,  L_{\frac52}(\mathbb R^3))\subset L_{4}([0,\infty[\,\,,L_2(\mathbb R^3))\,,
\end{equation}
Hence
\begin{equation}\label{vl7}
v^1 \otimes v^1 \in L_2(\mathbb R^3\times (0,T))
\end{equation}
for every $T>0$, which is enough to have $v^{21}$ in the energy class on every bounded time interval. From this it is heuristically clear that we should have
$v=v^1+v^2$, where $v^2$ is in the energy class on every bounded time interval. The general idea that the correction $v^2$ might be easier to deal with than the full solution $v$ is standard, and has been already suggested by considerations in Leray's classical paper~\cite{Le}, and also been often used  in the works on other PDEs.

We now discuss more technical details.
We start with  the definition of the {\it mild solutions} solutions, which is usually considered in connection with the perturbation method for the problem (\ref{system}) -- (\ref{ic}).
\begin{definition}\label{Kato}
A function $u\in C([0,T];L_3(\mathbb R^3))\cap L_5(Q_T)$, satisfying the identity
\begin{equation}\label{integral}
v(x,t)=\int\limits_{\mathbb R^3}\Gamma(x-y,t)v_0(x)dx+\int\limits^t_0\int\limits_{\mathbb R^3}
K(x,y,t-s):v(y,s)\otimes v(y,s)dyds\end{equation}
for all $z=(x,t)\in Q_T:=\mathbb R^3\times]0,T[$, is called a mild solution to problem (\ref{system})-- (\ref{ic}) in $Q_T$. \end{definition}
Here, $\Gamma$ is the known heat kernel and a kernel $K$ is derived with the help of $\Gamma$ as follows:
$$\Delta_y\Phi(x,y,t)=\Gamma(x-y,t), $$$$ K_{mjs}(x,y,t)=\delta_{mj}\frac{\partial^3 \Phi}{\partial y_i\partial y_i\partial y_s}(x,y,t)-\frac{\partial^3 \Phi}{\partial y_m\partial y_j\partial y_s}(x,y,t).$$

Although  mild solutions are known to be unique,
their global existence is an open problem. So,
they exist locally in time and, moreover, in proofs that are available to the authors, the time interval of the existence of mild solutions depends not only on the value $\|v_0\|_{3,\mathbb R^3}$ but on the integral modulus of continuity of $v_0$ in $L_3(\mathbb R^3)$ as well.

The classical existence results about the weak solutions rely upon relatively simple considerations based on the energy estimate~(\ref{vl2}). The main issue in this approach is the uniqueness, which at the moment can only be proved via regularity.  For our set up, such a notion of weak solutions is already known due to Lemarie-Rieusset, see \cite{LR1}. In this paper, we shall call them local energy Leray-Hopf solutions or just local energy solutions or even just Lemarier-Rieusset solutions. The important feature of those solutions is that the very rich $\varepsilon$-regularity theory developed by Caffarelli-Kohn-Nirenberg is applicable to them. Here, it is a definition, which is essentially given by Lemarie-Rieusset, see also \cite{KS}.
\begin{definition}\label{id1}
We call a pair of functions $v$ and $q$ defined in the space-time
cylinder $Q_T=\mathbb R^3\times ]0,T[$ a local energy weak
 Leray-Hopf solution
 to the Cauchy problem (\ref{system})--(\ref{ic}) if
they satisfy the following conditions:
 \begin{eqnarray}\label{i4}
 v\in L_\infty(0,T;L_{2,unif}),\,\nabla v\in L_{2,unif}(0,T), \,
 q\in L_\frac
32(0,T;L_{\frac 32,loc}(\mathbb R^3));&&\end{eqnarray}
\begin{equation} \label{i5}v\,\, and\,\,q\,\, meet\,\, (\ref{system})\,\,in
\,\,the\,\,sense\,\, of\,\,distributions;\end{equation} \begin{equation} \label{i6} the\,
function\,\,t\mapsto\int\limits_{\mathbb R^3}v(x,t)\cdot w(x)\,dx
\,is\,\, continuous\,\,on\,\,[0,T] \end{equation} for any compactly supported
function $w\in L_2(\mathbb R^3)$;

for any compact K, \begin{equation} \label
{i7}\|v(\cdot,t)-v_0(\cdot)\|_{L_{2}(K)}\to 0
\quad as\quad t\to +0;\end{equation}
\begin{eqnarray}\label{i8}
\nonumber \int\limits_{\mathbb R^3}\varphi|v(x,t)|^2\,dx+
2\int\limits_0^{t}\int\limits_{\mathbb R^3}\varphi|\nabla
v|^2\,dxdt\leq \int\limits_0^{t}\int\limits_{\mathbb
R^3}\Big(|v|^2(\partial_t\varphi+\Delta\varphi) &&  \\
 +v\cdot\nabla \varphi(|v|^2+2q)\Big)\,dxdt  &&
\end{eqnarray}
for a.a. $t\in ]0,T[$ and for all nonnegative smooth functions $\varphi$
vanishing in a neighborhood of the parabolic boundary of the
space-time cylinder ${\mathbb R^3}\times ]0,T[$;

 for any
$x_0\in\mathbb R^3$, there exists a function $c_{x_0}\in L_\frac 32
(0,T)$ such that \begin{equation} \label
 {i9}q_{x_0}(x,t):=
q(x,t)-c_{x_0}(t)=q_{x_0}^1(x,t)+q_{x_0}^2(x,t),\end{equation} for $(x,t)\in
B(x_0,3/2)\times ]0,T[$, where
$$q_{x_0}^1(x,t)=-\frac 13 |v(x,t)|^2
+\frac 1{4\pi}\int\limits_{B(x_0,2)}K(x-y): v(y,t)\otimes
v(y,t)\,dy,$$$$q_{x_0}^2(x,t)=\frac 1{4\pi}\int\limits_{\mathbb
R^3\setminus B(x_0,2)}(K(x-y)-K(x_0-y)): v(y,t)\otimes v(y,t)\,dy$$
and $K(x)=\nabla^2 (1/|x|)$.
\end{definition}
Here,  marginal Morrey spaces

$$L_{m,unif}:=\{u\in L_{m,loc}(\mathbb R^3):\,\,\|u\|_{2,unif}=\sup\limits_{x_0\in\mathbb R^3}\|u\|_{m,B(x_0,1)}<\infty\}$$
and
$$L_{m,unif}(0,T):=\{u\in L_m(0,T;L_{m,loc}(\mathbb R^3):\,\, \sup\limits_{x_0\in B(x_0,1)}\int\limits^T_0\int\limits_{B(x_0,1)}|u(x,t)|^mdxdt$$$$<\infty\}$$
have been used.

Lemarie-Rieusset proved  local in time existence of a local energy solution for $v_0\in L_{2,unif}$. But, what seems to be more important, he showed that if $v_0\in E_2$, where
$E_m$ is the completion of $C^\infty_{0,0}(\mathbb R^3):=\{v\in C^\infty_0(\mathbb R^3):\,\,\mbox{div}\, v=0\}$ in $L_m(\mathbb R^3)$, then the above solution exists globally, i.e., for any $T>0$. The corresponding uniqueness theorem is also true saying that if one has two local energy solutions to (\ref{system})--(\ref{ic}) with the same initial data and one of them belongs to $C([0,T];E_3)$, then they coincide on the interval $]0,T[$. It should be noticed that  the space $L_3(\mathbb R^3)$ is continuously imbedded into $E_3$ and of course into $E_2$. So, in this sense, local energy solutions can be regarded as a possible tool
to study the case of initial data belonging to $L_3(\mathbb R^3)$. That has been exploited in the  paper \cite{Ser2012} on the behaviour  of $L_{3}$-norm of a solution  as time tends to a possible blow up. Moreover, as it has been shown there, the limit of a sequence of solutions with weakly converging $L_3$-initial data
is a local energy solution as well. By the way, the same has been proven for initial data from $H^\frac 12 $ in papers, see \cite{RusSve} and \cite{S6}. However, the aforesaid scheme does not work in the case of unbounded domains say as a half space $\mathbb R^3_+$. The reason is simple: it is unknown how to construct local energy solutions in unbounded domains that are different from $\mathbb R^3$. 

The aim of the presented note is to give a 
definition of  global weak solutions to the Cauchy problem for the Navier-Stokes system with $L_3$-initial data that it is not based on the conception of local energy solutions.  This approach seems to be interesting itself and certainly
simplifies the above mentioned proofs in papers \cite{S6} and \cite{Ser2012}. Moreover, it works well for other unbounded domains.

The new  definition relies on two simple facts. Consider a Stokes problem:
\begin{equation}\label{Stokes}
\partial_tv^1-\Delta v^1=-\nabla q^1,\qquad \mbox{div}\,v^1=0
\end{equation}
in $\Omega\times ]0,\infty[$,
\begin{equation}\label{sbc}   v^1(x,t)=0
\end{equation}
for all $(x,t)\in\partial\Omega\times [0,\infty[$, and
\begin{equation}\label{sic}
v^1(\cdot,0)=v_0(\cdot)\in L_3(\Omega).
\end{equation}
Assume that $\Omega\subset\mathbb R^3$ is so good that $v^1$ obeys the following estimates:
\begin{equation}\label{1stStokesest}
\|v^1\|_{3,\infty,\Omega\times]0,\infty[}+\|v^1\|_{5,\Omega\times]0,\infty[}<c\| v_0\|_{3,\Omega}\end{equation}
and
\begin{equation}\label{2ndStokesest}
\|\nabla v^1(\cdot,t)\|_{3,\Omega}\leq \frac c{\sqrt{t}}\| v_0\|_{3,\Omega}\end{equation}
for all $t>0$.
 It is well known that (\ref{1stStokesest}) and (\ref{2ndStokesest}) are satisfied if $\Omega=\mathbb R^3$ or if  $\Omega=\mathbb R^3_+$ (for other cases, see \cite{Giga1986}).

 In what follows, it is assumed that $\Omega=\mathbb R^3$ and thus we may let $q^1=0$. The general case will be discussed elsewhere.

\begin{definition}\label{weakL3sol} Let $v_0\in L_3(\mathbb R^3)$ and let $v^1$ be a solution to problem (\ref{Stokes})--(\ref{sic}). A function $v$, defined in $Q_\infty=\mathbb R^3\times ]0,\infty[$, is called a weak $L_3$-solution to problem (\ref{system})--(\ref{ic}) if $v=v^1+v^2$, where $v^2\in L_{2,\infty}(Q_T)\cap W^{1,0}_2(Q_T)$ with $Q_T=\mathbb R^3\times ]0,T[$ for any $T>0$ and  satisfies the following conditions:
\begin{equation}\label{perturbsystem}
\partial_tv^2+ v^2\cdot\nabla v^2-\Delta v^2+\nabla q^2=-v^1\cdot\nabla v^2 -v^2\cdot\nabla v^1-v^1\cdot\nabla v^1,\quad {\rm div}\,v^2=0\end{equation}
in $Q_\infty$ in the sense of distributions with $q^2\in L_\frac 32(0,T;L_{\frac 32,loc}(\mathbb R^3))$ for $T>0$;

for all $w\in L_2(\mathbb R^3)$, the function
\begin{equation}\label{weakcount}
t\mapsto\int\limits_{\mathbb R^3}v^2(x,t)\cdot w(x)dx
\end{equation}
is continuos at any $t\in [0,\infty[$;
 \begin{equation}\label{incon}
 \|v^2(\cdot,t)\|_{2,\mathbb R^3}\to 0
 \end{equation}
as $t\downarrow  0$;
\begin{equation}\label{eneryineq}
\frac 12\int\limits_{\mathbb R^3}|v^2(x,t)|^2dx+\int\limits^t_0\int\limits_{\mathbb R^3}|\nabla v^2|^2dxds\leq \int\limits^t_0\int\limits_{\mathbb R^3}v^1\otimes v:\nabla v^2
dxds
\end{equation}
for all $t>0$;

for a.a. $t>0 $
$$ \int\limits_{\mathbb R^3}\varphi(x,t)|v^2(x,t)|^2dx+2\int\limits^t_0\int\limits_{\mathbb R^3}\varphi|\nabla v^2|^2dxds\leq $$
 \begin{equation}\label{loceneryineq} \leq \int\limits^t_0\int\limits_{\mathbb R^3}\Big(2v^1\otimes v:\nabla v^2\varphi  +|v^2|^2(\Delta \varphi+\partial_t\varphi)+
\end{equation}
$$+v\cdot\nabla \varphi(|v^2|^2+2q^2+2v^1\cdot v^2)  \Big)dxds
$$
for any non-negative function $\varphi\in C^\infty_0(Q_\infty)$.

\end{definition}
\begin{remark}\label{energybound} If $\|v_0\|_{3,\mathbb R^3}\leq M$, then
\begin{equation}\label{eb}
|v^2|^2_{2,Q_T}:=\|v^2\|^2_{2,\infty, Q_T}+\|\nabla v^2\|^2_{2,Q_T}\leq c(M)\sqrt{T}\end{equation}
\end{remark}
\begin{remark}\label{w=locenergy} Indeed, any weak $L_3$-solution is a local energy solution.
\end{remark}

\begin{theorem}\label{existence}
Problem (\ref{system})--(\ref{ic}) has at least one weak $L_3$-solution.
\end{theorem}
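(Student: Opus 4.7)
The plan is to construct $v^2$ directly as a Leray--Hopf type weak solution of the perturbed system (\ref{perturbsystem}) with zero initial data on each finite cylinder $Q_T$, treating $v^1$ as a given coefficient endowed with the Kato--type regularity (\ref{1stStokesest})--(\ref{2ndStokesest}). The key input is (\ref{vl7}): since $v^1\otimes v^1\in L_2(Q_T)$, the right-hand side of the equation for $v^2$ is an honest $L_2$ source and the full Leray--Hopf machinery becomes available on the perturbation variable.

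First I would set up a Galerkin scheme $v^{2,k}$ in a basis of smooth divergence-free functions with zero initial data and write (\ref{perturbsystem}) in finite-dimensional form. Testing with $v^{2,k}$ and using $\mbox{div}\,v^1=\mbox{div}\,v^{2,k}=0$ to rewrite the cross terms via integration by parts, the resulting energy identity takes exactly the form
\begin{equation*}
\tfrac12\tfrac{d}{dt}\|v^{2,k}\|_{2,\mathbb R^3}^2 + \|\nabla v^{2,k}\|_{2,\mathbb R^3}^2 = \int_{\mathbb R^3} v^1\otimes v^{2,k}:\nabla v^{2,k}\,dx + \int_{\mathbb R^3} v^1\otimes v^1:\nabla v^{2,k}\,dx,
\end{equation*}
matching (\ref{eneryineq}). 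The first term is controlled by H\"older with exponents $(5,10/3,2)$ together with the Gagliardo--Nirenberg inequality $\|v^{2,k}\|_{10/3}\le c\|v^{2,k}\|_2^{2/5}\|\nabla v^{2,k}\|_2^{3/5}$, followed by Young's inequality with conjugate pair $(5/4,5)$; this yields $\tfrac14\|\nabla v^{2,k}\|_2^2+c\|v^1\|_5^5\|v^{2,k}\|_2^2$. The second term is handled by Cauchy--Schwarz and Young with forcing $\|v^1\otimes v^1\|_2^2$. Since (\ref{1stStokesest}) gives $\int_0^\infty\|v^1\|_5^5\,dt\le c\|v_0\|_3^5$ and (\ref{vl6}) gives $\int_0^T\|v^1\otimes v^1\|_2^2\,dt\le c\|v_0\|_3^4\sqrt T$, Gronwall's lemma produces a uniform-in-$k$ bound of $v^{2,k}$ in $L_{2,\infty}(Q_T)\cap W^{1,0}_2(Q_T)$ of size $c(\|v_0\|_3)\sqrt T$, confirming Remark \ref{energybound}.

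Passage to the limit is classical: Aubin--Lions compactness on compact sets, weak convergence in $W^{1,0}_2$, and weak-$*$ convergence in $L_{2,\infty}$ suffice for every nonlinear term of (\ref{perturbsystem}) to pass to the limit, using strong $L_{2,loc}$ convergence of $v^{2,k}$ for the quadratic self-interaction and the fixed $L_5$ coefficient $v^1$ for the cross terms. The pressure $q^2$ is then recovered from the divergence of (\ref{perturbsystem}) via the potential-theoretic splitting $q^2=q^{2,1}_{x_0}+q^{2,2}_{x_0}$ already used in Definition \ref{id1}, which automatically delivers membership in $L_{\frac 32}(0,T;L_{\frac 32,loc}(\mathbb R^3))$. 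The global and local energy inequalities (\ref{eneryineq})--(\ref{loceneryineq}) come out by testing the Galerkin equation against $v^{2,k}$ and $\varphi v^{2,k}$ respectively, exploiting lower-semicontinuity of the gradient norms on the left-hand side. The initial condition $\|v^2(\cdot,t)\|_{2,\mathbb R^3}\to 0$ is read off from the global energy inequality as $t\downarrow0$, and the weak-in-time continuity (\ref{weakcount}) is standard once the equation and the bounds are in hand.

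The main obstacle is making the a priori estimate depend \emph{only} on $\|v_0\|_{3,\mathbb R^3}$: this is what forces one to control the cross term $\int v^1\otimes v^2:\nabla v^2$ through the space--time norm $\|v^1\|_{5,Q_\infty}$ supplied by (\ref{1stStokesest}), rather than through any pointwise-in-time norm of $v^1$ (the bound $\|v^1(t)\|_{3,\mathbb R^3}\le c\|v_0\|_{3,\mathbb R^3}$ is too large to be absorbed into $\|\nabla v^2\|_2^2$ when $\|v_0\|_{3,\mathbb R^3}$ is large). Once the estimate is closed this way, the remaining steps are a routine adaptation of the classical Leray construction to the perturbation variable $v^2$; the conceptually new point is precisely that this programme can be executed directly on $v^2$ without ever having to invoke local-energy solutions for $v$ itself.
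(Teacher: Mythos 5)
Your a priori estimate is exactly the one the paper runs (H\"older with exponents $(5,10/3,2)$, Gagliardo--Nirenberg, Young, Gronwall, and $\int_0^T\|v^1\|_{5}^{5/2}\,dt\le\sqrt T\,\|v^1\|_{5,Q_T}^{5/2}$ to produce the $c(\|v_0\|_3)\sqrt T$ bound of Remark \ref{energybound}), and your identification of the main difficulty --- closing the estimate through $\|v^1\|_{5,Q_\infty}$ rather than through $\sup_t\|v^1(t)\|_3$ --- is correct. But there is a genuine gap in the approximation scheme. A weak $L_3$-solution must satisfy the \emph{local} energy inequality (\ref{loceneryineq}), and a Galerkin scheme cannot deliver it: the Galerkin identity holds only when tested against elements of the finite-dimensional divergence-free subspace, and $\varphi v^{2,k}$ is neither in that subspace nor divergence-free, so "testing the Galerkin equation against $\varphi v^{2,k}$" is not a legitimate operation. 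This is the classical obstruction that forced Caffarelli--Kohn--Nirenberg to work with suitable weak solutions built by other means; to this day it is not known that Galerkin limits satisfy the local energy inequality. Since (\ref{loceneryineq}) is what makes the $\varepsilon$-regularity theory applicable (and is used again in the uniqueness results of Section 4), this is not a cosmetic omission.

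The paper circumvents this by a Leray-type regularization: the convective velocity is mollified, the regularized problem is solved on balls $B(R_k)$ by the Leray--Schauder principle (Propositions \ref{52p2}, \ref{52p3}), and the approximations are shown to have $\partial_t u^\varrho\in L_2(Q_T)$ and $\nabla u^\varrho\in L_{2,\infty}(Q_T)$ (estimate (\ref{strong})), so that one may multiply the actual PDE by $\varphi^2 u^\varrho$ and integrate by parts to get a local energy \emph{identity} at the approximate level, which survives the limits $R_k\to\infty$ and $\varrho\to 0$ as an inequality. A second point you gloss over: the pressure. The paper splits $p$ into four pieces $p^{2,1},\dots,p^{2,4}$ matched to the four source terms, controls each by Solonnikov coercive estimates in different mixed norms, and uses the Poincar\'e-type inequalities (\ref{pres1})--(\ref{pres4}) to show that the flux term $\int u\cdot\nabla\varphi^2\, p$ through the annulus $B(2R)\setminus B(R)$ vanishes as $R\to\infty$; this is how the global inequality (\ref{eneryineq}) is extracted from the local one. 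In your scheme the global inequality would indeed come for free from the Galerkin energy identity, but the local one --- and the $L_{3/2}(0,T;L_{3/2,loc})$ pressure with which it must be coupled --- would not. To repair the proposal you would have to replace the Galerkin projection by a regularization that keeps the approximate solutions strong enough to test against non-solenoidal, localized multiples of themselves.
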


The following important property of weak $L_3$-solutions, in fact, can be regarded as another strong motivation for introducing them.  To explain it, let us consider a sequence $v^{(m)}_0\in L_3(\mathbb R^3)$ such that
$$v^{(m)}_0\rightharpoonup v_0$$
in $L_3(\mathbb R^3)$. We denote by $u^{(m)}$ a weak solution to the Cauchy problem (\ref{system})--(\ref{ic}) with initial data $v^{(m)}_0$.
\begin{theorem}\label{weakconvergence} There exists a subsequence of $u^{(m)}$ (still denoted by $u^{(m)}$) such that:
$$u^{(m)}\rightharpoonup u, \qquad \nabla u^{(m)}\rightharpoonup \nabla u$$
in $L_2(Q_T)$ for any $T>0$
and
$$u^{(m)}\to u$$
in $L_{3{\rm,loc}}(Q_\infty)$, where $u$ is a weak $L_3$-solution to the Cauchy problem (\ref{system})--(\ref{ic}) with initial data $v_0$.
\end{theorem}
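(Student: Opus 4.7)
The plan is to exploit the splitting $u^{(m)}=v^{1,(m)}+v^{2,(m)}$ built into Definition~\ref{weakL3sol}, reducing convergence of the full sequence to compactness of the energy-class perturbation. Weak convergence forces $M:=\sup_m\|v_0^{(m)}\|_{3,\mathbb R^3}<\infty$, so the linear estimates~(\ref{1stStokesest})--(\ref{2ndStokesest}) yield uniform bounds on $v^{1,(m)}$ in $L_\infty(0,\infty;L_3)\cap L_5(Q_\infty)$, while Remark~\ref{energybound} gives, on every $Q_T$, a uniform bound on $v^{2,(m)}$ in $L_\infty(0,T;L_2)\cap L_2(0,T;\dot H^1)$. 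A diagonal extraction in $T$ then produces weak limits $v^{i,(m)}\rightharpoonup v^i$ in the natural spaces, and weak continuity of the heat semigroup on $L_3$ identifies $v^1$ as the caloric extension of $v_0$.

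The next step is to upgrade these weak limits to local strong convergence. For the linear part, the heat kernel $\Gamma(\cdot,t)$ lies in $L_{3/2}(\mathbb R^3)$ for each $t>0$, so weak $L_3$-convergence of $v_0^{(m)}$ gives pointwise convergence of $v^{1,(m)}(x,t)$ at every $(x,t)$ with $t>0$; combined with the uniform $L_5(Q_\infty)$ bound, Vitali's theorem upgrades this to strong convergence $v^{1,(m)}\to v^1$ in $L_p(K\times(0,T))$ for every compact $K\subset\mathbb R^3$ and every $p<5$. For the perturbation, equation~(\ref{perturbsystem}) together with a Calder\'on--Zygmund representation of type~(\ref{i9}) for $q^{2,(m)}$ in terms of $u^{(m)}\otimes u^{(m)}$ produces a uniform bound on $\partial_t v^{2,(m)}$ in $L_{4/3}\bigl(0,T;(H^1_{\rm loc})^\ast\bigr)$; Aubin--Lions then gives strong $L_2$-local convergence of $v^{2,(m)}$, which the Gagliardo--Nirenberg embedding $L_\infty L_2\cap L_2\dot H^1\hookrightarrow L_{10/3}$ promotes by interpolation to strong convergence in $L_{3,\rm loc}(Q_\infty)$. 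Altogether $u^{(m)}\to u$ in $L_{3,\rm loc}(Q_\infty)$.

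With this compactness in hand, the tensors $v^{i,(m)}\otimes v^{j,(m)}$ converge strongly in $L_{3/2,\rm loc}$, so both the distributional formulation of~(\ref{perturbsystem}) and the identification of the limit pressure via~(\ref{i9}) pass to the limit. The global energy inequality~(\ref{eneryineq}) is preserved by weak lower semicontinuity on the left together with strong convergence on the right, and~(\ref{loceneryineq}) is handled similarly. The continuity property~(\ref{weakcount}) survives via the uniform time-derivative bound, and the initial condition~(\ref{incon}) is automatic: Remark~\ref{energybound} gives $\|v^{2,(m)}(\cdot,t)\|_{2,\mathbb R^3}^2\le c(M)\sqrt t$ uniformly, and lower semicontinuity transfers the bound to $v^2$. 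I expect the principal obstacle to be careful handling of the pressures $q^{2,(m)}$, both in the Aubin--Lions time-derivative estimate and in the limit of the term $v\cdot\nabla\varphi\,q^2$ within the local energy inequality; in each case the pressure must be decomposed, as in~(\ref{i9}), into a Riesz-type local part bounded by $|u^{(m)}|^2$ via Calder\'on--Zygmund and a smoother non-local remainder with quadratic spatial decay, and the strong $L_{3,\rm loc}$-convergence of $u^{(m)}$ is precisely what is needed to pass to the limit in the former.
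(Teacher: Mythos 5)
Your overall architecture is the same as the paper's: split $u^{(m)}=v^{1,(m)}+v^{2,(m)}$, get uniform bounds from $M=\sup_m\|v_0^{(m)}\|_{3,\mathbb R^3}$ and Remark \ref{energybound}, extract a strongly $L_{3,\mathrm{loc}}$-convergent subsequence of $v^{2,(m)}$, and pass to the limit in the equation and the energy inequalities, with (\ref{incon}) recovered from the uniform bound $\|v^{2,(m)}\|^2_{2,\infty,Q_t}\le c(M)\sqrt t$ exactly as in the paper. The technical implementation differs: where you invoke Aubin--Lions with a dual-space bound on $\partial_t v^{2,(m)}$ and a Calder\'on--Zygmund representation of $q^{2,(m)}$, the paper splits $v^{2,(m)}$ and $q^{2,(m)}$ into four pieces driven by $f^1=-v^{2(m)}\cdot\nabla v^{2(m)},\dots,f^4=-v^{1(m)}\cdot\nabla v^{1(m)}$ and applies Solonnikov's coercive estimates in mixed norms, obtaining uniform bounds on $\partial_t u^{2,i}$ and $\nabla p^{2,i}$ in $L_{s,l}(Q_T)$. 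These gradient-of-pressure bounds are then fed into the Poincar\'e-type inequalities (\ref{pres1})--(\ref{pres4}) to show that the flux term $\int u\cdot\nabla\varphi^2\,q^2$ vanishes as the spatial cut-off radius $R\to\infty$, which is how the global inequality (\ref{eneryineq}) is extracted from (\ref{loceneryineq}); your sketch of a ``local Riesz part plus decaying remainder'' would have to be developed to the same quantitative level to close this step.

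The one genuine gap is in your passage to the limit in the local energy inequality. Strong convergence of $v^{i,(m)}\otimes v^{j,(m)}$ in $L_{3/2,\mathrm{loc}}$ suffices for the distributional form of (\ref{perturbsystem}), but the right-hand side of (\ref{loceneryineq}) contains $2\int v^1\otimes v:\nabla v^2\,\varphi$, where the tensor is paired with $\nabla v^{2,(m)}$, which converges only \emph{weakly} in $L_2$. You therefore need strong convergence of $v^{1,(m)}\otimes v^{2,(m)}$ in $L_{2,\mathrm{loc}}$, and this is exactly borderline: the product of an $L_5$-bounded and an $L_{10/3}$-bounded sequence lies in $L_2$ with no integrability to spare, so strong convergence of the factors in $L_{5-\varepsilon}$ and $L_{10/3-\varepsilon}$ (which is what Vitali and Aubin--Lions give you) does not yield strong $L_2$ convergence of the product. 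The paper circumvents this by integrating by parts so that the derivative falls on $v^1$, reducing the term to $-\int (v^{2(m)}\cdot\nabla v^1)\cdot v^{2(m)}\varphi^2$, where only $v^{2(m)}\otimes v^{2(m)}$, strongly convergent in $L_{3/2,\mathrm{loc}}$, is tested against the fixed weight $\nabla v^1\,\varphi^2$ for $\varphi$ supported in $t>0$; the extension to $\varphi\in C^\infty_0(\mathbb R^4)$ is then done with the temporal cut-off $\chi$ and the $\sqrt\varepsilon$ energy bound. Your route can be repaired -- e.g.\ using $\|v^{1,(m)}(\cdot,t)\|_{\infty,\mathbb R^3}\le ct^{-1/2}M$ to upgrade the convergence of the linear parts to every $L_{p,\mathrm{loc}}$, $p<\infty$, on time slabs $t\ge\delta$, followed by the same cut-off argument -- but as written this step is not justified.
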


Now, let us discuss some uniqueness issues related to weak $L_3$-solutions to the  Cauchy problem (\ref{system})--(\ref{ic}). We start with the following auxiliary statement:
\begin{pro}\label{uniqueness1}
Let $v$ and $\widetilde{v}$ be two weak $L_3$-solutions to the the Cauchy problem for the Navier-Stokes equations corresponding to the initial data $v_0\in L_3(\mathbb R^3)$.
Suppose that $v\in L_{3,\infty, Q_T}$. There exists an absolute constant $\mu>0$ such that if, for some number  $0<T_1\leq T$,
\begin{equation}\label{maincond}
\|v-v_0\|_{3,\infty,Q_{T_1}}\leq \mu,
\end{equation}
then $v=\widetilde v$ in $Q_T$.
\end{pro}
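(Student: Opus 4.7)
Since $v$ and $\widetilde v$ are built from the same $L_3$-datum $v_0$, they share the common Stokes part $v^1$ from the decomposition in Definition~\ref{weakL3sol}. Hence the difference $w:=v-\widetilde v=v^2-\widetilde v^2$ belongs to $L_{2,\infty}(Q_T)\cap W^{1,0}_2(Q_T)$, satisfies $\|w(t)\|_{2,\mathbb R^3}\to 0$ as $t\to 0^+$, and solves, in the sense of distributions,
\[
\partial_t w+\mathrm{div}(v\otimes w+w\otimes\widetilde v)-\Delta w+\nabla p=0,\qquad\mathrm{div}\,w=0,
\]
with $p:=q^2-\widetilde q^2$ in $L_{3/2}(0,T;L_{3/2,\mathrm{loc}}(\mathbb R^3))$.

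\textbf{Energy inequality for $w$.} The analytic core of the proof is to establish, for a.e.\ $t\in(0,T)$,
\[
\tfrac12\|w(t)\|_{2,\mathbb R^3}^2+\int_0^t\|\nabla w\|_{2,\mathbb R^3}^2\,ds\;\le\;\int_0^t\!\int_{\mathbb R^3}|v||w||\nabla w|\,dx\,ds.
\]
Formally one tests the equation for $w$ against $w$, uses $\mathrm{div}\,w=0$ to eliminate $\int(v\otimes w){:}\nabla w$, and then rewrites $\int\widetilde v_jw_i\partial_iw_j=\int v_jw_i\partial_iw_j$ by substituting $\widetilde v=v-w$ and observing $\int w_jw_i\partial_iw_j=0$. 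Rigorously, I would combine the global energy inequality~(\ref{eneryineq}) for $\widetilde v^2$ with the distributional equation~(\ref{perturbsystem}) for $v^2$ tested against $\widetilde v^2$, adding in the corresponding identity for $v^2$; the assumption $v\in L_{3,\infty}(Q_T)$ enters here precisely to render the cross convective term $v\otimes v{:}\nabla\widetilde v^2$ integrable, and the pressure terms are handled by means of the splitting~(\ref{i9}) against a cutoff $\varphi_R$ exhausting $\mathbb R^3$, using (\ref{loceneryineq}) to pass to the limit $R\to\infty$.

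\textbf{Closing on $[0,T_1]$.} Split $v=v_0^{\eta}+r_\eta+(v-v_0)$, where $v_0^\eta:=v_0\mathbf{1}_{\{|v_0|\le N(\eta)\}}$ and the truncation level $N(\eta)$ is chosen so that $\|r_\eta\|_{3,\mathbb R^3}\le\eta$. By H\"older, the Sobolev embedding $\|w\|_{6,\mathbb R^3}\le c\|\nabla w\|_{2,\mathbb R^3}$, and Young's inequality,
\[
\int_{\mathbb R^3}|v||w||\nabla w|\,dx\le c(\mu+\eta)\|\nabla w\|_2^2+\tfrac14\|\nabla w\|_2^2+C(\eta)\|w\|_2^2
\]
on the slab $t\in(0,T_1)$, with $C(\eta)\sim\|v_0^\eta\|_\infty^2$. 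Fixing the \emph{absolute} constant $\mu$ and then $\eta$ so that $c(\mu+\eta)\le\tfrac14$, one arrives at
\[
\tfrac{d}{dt}\|w\|_{2,\mathbb R^3}^2+\|\nabla w\|_{2,\mathbb R^3}^2\le 4C(\eta)\|w\|_{2,\mathbb R^3}^2,
\]
and Gronwall together with $\|w(0)\|_{2,\mathbb R^3}=0$ gives $w\equiv 0$ on $[0,T_1]$.

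\textbf{Continuation and main obstacle.} To extend uniqueness to $[0,T]$, set $T_\ast:=\sup\{t\le T:w\equiv 0\text{ on }[0,t]\}\ge T_1$, and argue by contradiction that $T_\ast<T$ is impossible. The weak $L_2$-continuity forces $v(T_\ast)=\widetilde v(T_\ast)=:u_0\in L_3$; restarting the Stokes decomposition at $T_\ast$ with datum $u_0$ (so that the new $\bar v^1\in C([T_\ast,T];L_3)$ and the new correction lies in the energy class and vanishes at $T_\ast^+$), and using the standing bound $v\in L_{3,\infty}(Q_T)$, one obtains $\|v-u_0\|_{3,\infty,(T_\ast,T_\ast+\delta)}\le\mu$ for some $\delta>0$; applying the short-time step above on $(T_\ast,T_\ast+\delta)$ then contradicts maximality. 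The principal difficulty throughout lies in making the second paragraph rigorous: the ``cross-testing'' of the two weak formulations and the control of the pressure must be carried out inside the local energy inequality~(\ref{loceneryineq}) with cutoffs, and it is exactly at this point that the hypothesis $v\in L_{3,\infty}(Q_T)$ is indispensable; the smallness threshold $\mu$ depends only on the Sobolev constant $c$ and is therefore absolute.
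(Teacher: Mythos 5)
Your overall strategy coincides with the paper's: form $w=v^2-\widetilde v^2$, derive an energy inequality for $w$ by cross-testing, absorb the convective term near $t=0$ using (\ref{maincond}), and then continue to $T$. Your treatment of the short-time step by truncating $v_0$ (writing $v=v_0^\eta+r_\eta+(v-v_0)$ and putting the bounded piece into a Gronwall term) is a legitimate alternative to the paper's splitting $v=v^1+v^2$, where the $v^1$-contribution is handled through $g^1(t)=\|v^1(\cdot,t)\|_{5,\mathbb R^3}^5\in L_1(0,T)$ and Gronwall. However, there are two genuine gaps. First, the step you yourself call the analytic core — the inequality $\frac12\|w(t)\|_2^2+\int_0^t\|\nabla w\|_2^2\le\int_0^t\int|v||w||\nabla w|$ — is left as a formal computation. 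You cannot test the distributional equation for $v^2$ against $\widetilde v^2$ (nor invoke an energy \emph{identity} for $v^2$) at the regularity supplied by Definition \ref{weakL3sol}. The paper's actual use of the hypothesis $v\in L_{3,\infty}(Q_T)$ is not merely to make the cross term integrable: it is to invoke the $L_{3,\infty}$-regularity theorem of Escauriaza--Seregin--\v Sver\'ak together with CKN $\varepsilon$-regularity (applied far from the origin, where the scaled energy of $v$ and of the pressure pieces $p^{2,i}$ is small by the gradient bounds (\ref{pres1})--(\ref{pres4})) to conclude $v\in L_\infty(Q_{\delta,T})$, hence $v^2\in W^{2,1}_2(Q_{\delta,T})$, $\nabla q^2\in L_2(Q_{\delta,T})$. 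Only after this upgrade are the energy identity for $v^2$ and the cross-testing against $\widetilde v^2$ justified, with the pressure handled globally. Without this regularity step your second paragraph does not close.

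Second, your continuation argument fails as written. The claim that $\|v-u_0\|_{3,\infty,(T_*,T_*+\delta)}\le\mu$ follows ``from the standing bound $v\in L_{3,\infty}(Q_T)$'' is false: $L_\infty$ in time with values in $L_3$ gives no right-continuity of $t\mapsto v(\cdot,t)$ in $L_3$ at $T_*$ (such continuity again requires the regularity upgrade above). Moreover, restarting the scheme at $T_*$ presupposes that $\widetilde v^2$ satisfies an energy inequality issued from time $T_*$, which Definition \ref{weakL3sol} does not provide — the inequalities (\ref{eneryineq}), (\ref{loceneryineq}) start from $t=0$. The paper sidesteps both problems: having shown $v\in L_\infty(Q_{\delta,T})\cap L_{3,\infty}(Q_T)$, it gets $v^2\in L_5(Q_{\delta,T})$ and reruns the \emph{same} global-in-time inequality for $w$, now estimating $\int|v^2|^2|w|^2$ via the $L_5$-norm on small subintervals of $(\delta,T)$ (the Prodi--Serrin mechanism of Theorem \ref{uniqueness2}); no restart of the energy inequality at an intermediate time is ever needed. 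You should either adopt that route or prove the strong $L_3$-continuity of $v$ and a restarted energy inequality for $\widetilde v^2$, neither of which is free.
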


\begin{remark}\label{limitsass} Condition (\ref{maincond}) holds if
\begin{equation}\label{limit}
\lim\limits_{t\downarrow0}\|v(\cdot,t)-v_0(\cdot)\|_{3,\mathbb R^3}=0.
\end{equation}
\end{remark}

An elementary modification of the final part of the proof of Proposition \ref{uniqueness1} gives the following statement.

\begin{theorem}\label{uniqueness2}
Let $v$ and $\widetilde{v}$ be two weak $L_3$-solutions to the the Cauchy problem for the Navier-Stokes equations corresponding to the initial data $v_0\in L_3(\mathbb R^3)$.
Suppose that $v^2\in L_{5} (Q_T).$
Then $v=\widetilde v$ in $Q_T$.
\end{theorem}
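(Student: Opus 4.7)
The plan is to follow the weak-strong uniqueness scheme, reusing essentially all of the proof of Proposition \ref{uniqueness1} and replacing only the decisive nonlinear estimate at the end. Since $v$ and $\widetilde v$ share the same Stokes part $v^1$, the difference $w:=v-\widetilde v=v^2-\widetilde v^2$ lies in $L_{2,\infty}(Q_T)\cap W^{1,0}_2(Q_T)$, with $\|w(\cdot,t)\|_2\to 0$ as $t\downarrow 0$ by (\ref{incon}). Subtracting the two copies of (\ref{perturbsystem}) and grouping the convective terms via $v^2\cdot\nabla v^2-\widetilde v^2\cdot\nabla\widetilde v^2 = w\cdot\nabla v^2+\widetilde v^2\cdot\nabla w$, one checks that $w$ satisfies the distributional equation
\begin{equation*}
\partial_t w - \Delta w + \widetilde v\cdot\nabla w + w\cdot\nabla v + \nabla\pi = 0,\qquad \mathrm{div}\,w=0.
\end{equation*}
The hypothesis $v^2\in L_5(Q_T)$ combined with (\ref{1stStokesest}) gives $v\in L_5(Q_T)$, placing $v$ in the classical Serrin class; this is what takes the place of the smallness condition (\ref{maincond}) of Proposition \ref{uniqueness1}.

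The second step is to derive a Gronwall-type inequality for $\|w(\cdot,t)\|_2^2$. Following the final part of the proof of Proposition \ref{uniqueness1}, one pairs the two perturbed systems with appropriate test functions and combines them with the energy inequalities (\ref{eneryineq}) for $v^2$ and $\widetilde v^2$; after the algebraic cancellations (using $\mathrm{div}\,\widetilde v=\mathrm{div}\,w=0$) the control of $\|w(\cdot,t)\|_2^2$ reduces to bounding the cubic residue $\int_{Q_t} v\cdot(w\cdot\nabla)w\,dx\,ds$. The \emph{elementary modification} is to estimate this residue by H\"older with exponents $(5,\tfrac{10}{3},2)$, the Gagliardo--Nirenberg interpolation $\|w\|_{10/3}\le c\|w\|_2^{2/5}\|\nabla w\|_2^{3/5}$, and Young's inequality with exponent $5/4$:
\begin{equation*}
\Bigl|\int_{\mathbb R^3} v\cdot(w\cdot\nabla)w\,dx\Bigr|\le C\|v\|_5\|w\|_2^{2/5}\|\nabla w\|_2^{8/5}\le \tfrac12\|\nabla w\|_2^2 + C'\|v\|_5^5\|w\|_2^2.
\end{equation*}
Absorbing the gradient term produces $\tfrac{d}{dt}\|w(\cdot,t)\|_2^2\le 2C'\|v(\cdot,t)\|_5^5\|w(\cdot,t)\|_2^2$; since $\int_0^T\|v\|_5^5\,ds<\infty$ and $\|w(\cdot,t)\|_2\to 0$ as $t\downarrow 0$, Gronwall forces $w\equiv 0$ on $Q_T$.

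The main technical obstacle I expect is the rigorous derivation of this energy inequality for $w$ at the level of weak $L_3$-solutions, since (\ref{eneryineq}) provides only an inequality and not an identity, so one cannot naively multiply the equation for $w$ by $w$ and integrate. The standard remedy is to exploit the Serrin regularity of $v$, which makes $v$ smooth for positive times and therefore an admissible test function in the weak equation for $\widetilde v^2$; combining the resulting identity with (\ref{eneryineq}) for the less regular $\widetilde v^2$, and symmetrically with the roles reversed, yields the desired differential inequality for $\|w(\cdot,t)\|_2^2$. This step is already carried out in the proof of Proposition \ref{uniqueness1}, and the only change needed to obtain Theorem \ref{uniqueness2} is to substitute the $L_5$-based Serrin estimate above for the $L_{3,\infty}$-smallness estimate used there.
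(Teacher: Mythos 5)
Your proposal is correct and follows essentially the same route the paper intends: the authors explicitly present Theorem \ref{uniqueness2} as an ``elementary modification of the final part of the proof of Proposition \ref{uniqueness1}'', namely replacing the smallness estimate of $I_2=\int|v^2|^2|w|^2$ based on $\|v^2\|_{3,\infty,Q_\delta}\le 2\mu$ by the same $L_5$--H\"older/interpolation/Young estimate already used for $I_1$ (with $g(t)=\|v(\cdot,t)\|_{5,\mathbb R^3}^5$ in place of $g^1$), followed by Gronwall. Your treatment of the cubic residue and of the preliminary regularity of $v$ (Serrin class $L_5(Q_T)$ instead of $L_{3,\infty}(Q_T)$) matches the paper's scheme, so nothing essential is missing.
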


\begin{theorem}\label{AppendixtoUni1}
Let $v$  be a weak $L_3$-solutions to the Cauchy problem for the Navier-Stokes equations corresponding to the initial data $v_0\in L_3(\mathbb R^3)
$. Then there exists $T_0=T(v_0)>0$ such that $v\in L_5(Q_{T_0})$. \end{theorem}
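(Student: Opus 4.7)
The plan is to identify $v$, on a short time interval, with the mild solution produced by classical perturbation theory, and then invoke the uniqueness Theorem \ref{uniqueness2}.

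First, I would set up the Kato fixed-point argument for the equation for $u^2 := u - v^1$. Since $v_0 \in L_3(\mathbb R^3)$, the Stokes flow $v^1$ belongs to $L_5(Q_\infty)$ by (\ref{1stStokesest}), hence $\|v^1\|_{5, Q_{T_0}} \to 0$ as $T_0 \downarrow 0$ by dominated convergence. Seeking $u = v^1 + u^2$ with $u^2(\cdot,0) = 0$, the equation for $u^2$ is a Stokes problem forced by $-\mathrm{div}(u^2 \otimes u^2 + v^1 \otimes u^2 + u^2 \otimes v^1 + v^1 \otimes v^1)$. A contraction mapping in $L_5(Q_{T_0})$ based on the Oseen kernel (as in \cite{Kato}) converges once $T_0 = T_0(v_0)$ is small enough, producing a mild solution $u$ with $u^2 \in L_5(Q_{T_0})$. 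Standard parabolic estimates for the Stokes system then furnish $u^2 \in L_{2,\infty}(Q_{T_0}) \cap W^{1,0}_2(Q_{T_0})$ and an associated pressure $p^2 \in L_{\frac32}(0,T_0; L_{\frac32,\mathrm{loc}}(\mathbb R^3))$ via Calder\'on--Zygmund theory.

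Second, I would check that the pair $(u, p^2)$ is itself a weak $L_3$-solution in the sense of Definition \ref{weakL3sol}. The perturbed system (\ref{perturbsystem}), the weak continuity (\ref{weakcount}), and the strong initial condition (\ref{incon}) follow directly from the mild-solution construction. Since $u^2 \in L_5 \cap L_{2,\infty} \cap W^{1,0}_2$, all trilinear terms appearing in the weak formulation are well defined, and one can legitimately multiply the equation for $u^2$ by $u^2$ (to obtain (\ref{eneryineq})) and by $u^2\varphi$ with $\varphi \in C^\infty_0(Q_\infty)$ (to obtain (\ref{loceneryineq})) and integrate, yielding both inequalities, in fact as equalities. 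Third, both $v$ and $u$ are then weak $L_3$-solutions with the common initial datum $v_0$, and $u^2 \in L_5(Q_{T_0})$. Theorem \ref{uniqueness2} gives $v = u$ in $Q_{T_0}$. Hence $v^2 = u^2 \in L_5(Q_{T_0})$, and combined with $v^1 \in L_5(Q_{T_0})$ we conclude $v \in L_5(Q_{T_0})$.

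The main obstacle I anticipate lies in the second step: verifying that the mild solution meets every requirement of Definition \ref{weakL3sol}, in particular the local energy inequality (\ref{loceneryineq}). The computation is routine for a solution of $L_5$-regularity, but one has to set up the pressure $p^2$ in the correct Lebesgue class and justify integration by parts against compactly supported $\varphi$ carefully at the mild-solution level. Everything else — the smallness of $v^1$ for small $T_0$ and the uniqueness input — is supplied directly by the estimates (\ref{1stStokesest})--(\ref{2ndStokesest}) and by Theorem \ref{uniqueness2}.
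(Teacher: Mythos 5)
Your proof is correct, but it reaches the comparison solution by a different road than the paper's own proof of this theorem. The paper does not re-run a fixed-point argument here: it returns to the approximating solutions $v^{2,\varrho}$ already built in the existence proof of Section 2, derives the self-improving bound $\|v^{2,\varrho}\|_{5,Q_{T_0}}\le c\big(\|v^{2,\varrho}\|^2_{5,Q_{T_0}}+\|v^{2,\varrho}\|_{5,Q_{T_0}}\|v^{1}\|_{5,Q_{T_0}}+\|v^{1}\|^2_{5,Q_{T_0}}\big)$, and under the smallness condition $\|v^{1}\|_{5,Q_{T_0}}\le 1/(5c)$ extracts the uniform estimate $\|v^{2,\varrho}\|_{5,Q_{T_0}}\le \frac 12\|v^{1}\|_{5,Q_{T_0}}$, which passes to the limit $\varrho\to 0$; since that limit is already known to be a weak $L_3$-solution from Section 2, nothing has to be re-verified, and Theorem \ref{uniqueness2} finishes the argument exactly as in your third step. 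Your route instead manufactures the comparison solution as the mild solution of the Appendix and must then check every clause of Definition \ref{weakL3sol} for it --- which is precisely the content of the final part of the Appendix and of Corollary \ref{mild}, so in effect you are deducing the theorem from Corollary \ref{mild} rather than from the existence construction. Both are legitimate. The paper's route inherits the weak-$L_3$ properties of the comparison solution for free but has to select the small root of the quadratic inequality by a continuity-in-$T_0$ argument (dismissed as ``not difficult to show''); your contraction mapping avoids that selection issue but pays for it with the verification of the energy and local energy inequalities and of the pressure class for the mild solution, which you rightly identify as the main obstacle (your claims there are sound: $u\otimes u\in L_{5/2}\cap L_\infty L_{3/2}\subset L_2(Q_{T_0})$ gives $u^2\in L_{2,\infty}\cap W^{1,0}_2$, and the $L_{3,\infty}$ bound on $u^2$ recovers Remark \ref{byproduct} as well). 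One point worth keeping explicit: the uniqueness input must be Theorem \ref{uniqueness2}, which only needs $u^2\in L_5(Q_{T_0})$, and not Theorem \ref{mainuniqueness}, which sits downstream of the present theorem and would make the argument circular; you cite the correct one.
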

\begin{remark}\label{byproduct}
From the proof of Theorem \ref{AppendixtoUni1}, it follows also that $v^2\in L_{3,\infty}(Q_{T_0})$.
\end{remark}

Our final result is (see also \cite{KozSohr} for a different set up):

\begin{theorem}\label{mainuniqueness}
Let $v$ and $\widetilde v$ be weak $L_3$-solutions to the Cauchy problem (\ref{system})--(\ref{ic}) with the same iniyial data $v_0$ from $L_3(\mathbb R^3)$. Let $v\in L_{3,\infty}(Q_T)$. Then $v=\widetilde v$ in $Q_T$.
\end{theorem}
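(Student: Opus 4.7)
The plan is to reduce Theorem \ref{mainuniqueness} to Proposition \ref{uniqueness1} via Remark \ref{limitsass}. Concretely, under the sole hypothesis $v\in L_{3,\infty}(Q_T)$ I would establish
\[
\lim_{t\downarrow 0}\|v(\cdot,t)-v_0\|_{3,\mathbb R^3}=0,
\]
which by Remark \ref{limitsass} gives the smallness condition (\ref{maincond}) on some slab $Q_{T_1}$ with $T_1$ arbitrarily small; Proposition \ref{uniqueness1} then delivers $v=\widetilde v$ throughout $Q_T$.

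The first step is to mine the extra regularity supplied by Theorem \ref{AppendixtoUni1}: there exists $T_0=T_0(v_0)>0$ such that $v\in L_5(Q_{T_0})$, and Remark \ref{byproduct} moreover places $v^2\in L_{3,\infty}(Q_{T_0})$. Since the Stokes lift satisfies $v^1\in L_5(Q_\infty)\cap L_{3,\infty}(Q_\infty)$ by (\ref{1stStokesest}), both summands in $v=v^1+v^2$ lie in $L_5(Q_{T_0})\cap L_{3,\infty}(Q_{T_0})$. Consequently the quadratic tensors $v^1\otimes v^2$, $v^2\otimes v^1$, and $v^2\otimes v^2$ all belong to $L_{5/2}(Q_{T_0})$, while $v^1\otimes v^1$ sits in the spaces recorded in (\ref{vl6}).

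The second step is to upgrade $v^2$ from the distributional form (\ref{perturbsystem}) to the Duhamel representation
\[
v^2(t)=-\int_0^t e^{(t-s)\Delta}\,\mathbb P\,\mathrm{div}\bigl(v^1\otimes v^1+v^1\otimes v^2+v^2\otimes v^1+v^2\otimes v^2\bigr)(s)\,ds,
\]
and to read from it, via the standard mapping properties of the Oseen kernel $e^{t\Delta}\mathbb P\,\mathrm{div}$ acting on the source spaces just identified, that $\|v^2(\cdot,t)\|_{3,\mathbb R^3}\to 0$ as $t\downarrow 0$. The identification of $v^2$ with the integral formula is enabled by combining (i) the distributional identity (\ref{perturbsystem}), (ii) the weak $L_2$-continuity (\ref{weakcount}), and (iii) the strong $L_2$-decay at zero (\ref{incon}); the pressure $q^2$ is eliminated by the Leray projector. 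Together with the elementary fact that $\|v^1(\cdot,t)-v_0\|_{3,\mathbb R^3}\to 0$ for the heat semigroup acting on $L_3$-data, this produces the displayed limit above, and Proposition \ref{uniqueness1} closes the argument.

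The main obstacle is precisely this identification step: rigorously passing from the weak-distributional notion of Definition \ref{weakL3sol} to an $L_3$-norm-continuous trajectory of $v^2$ at $t=0$. The extra regularity $v\in L_5(Q_{T_0})$ furnished by Theorem \ref{AppendixtoUni1} is exactly what renders the nonlinearity $v\otimes v - v^1\otimes v^1$ subcritical in the appropriate parabolic sense, so that the Duhamel integral is norm-continuous into $L_3(\mathbb R^3)$ down to $t=0$ and the hypothesis of Proposition \ref{uniqueness1} can be verified.
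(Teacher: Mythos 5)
Your overall strategy coincides with the paper's: both reduce Theorem \ref{mainuniqueness} to Proposition \ref{uniqueness1} by verifying, through Remark \ref{limitsass}, that $\|v(\cdot,t)-v_0\|_{3,\mathbb R^3}\to 0$ as $t\downarrow 0$, and both extract the needed short-time regularity from Theorem \ref{AppendixtoUni1}. The difference lies in how that $L_3$-continuity at $t=0$ is obtained. The paper invokes Corollary \ref{mild}: the Appendix constructs a mild solution $u\in C([0,T];L_3(\mathbb R^3))\cap L_5(Q_T)$ on a short interval, shows it is itself a weak $L_3$-solution, and then identifies $v$ with $u$ by the weak--strong uniqueness of Theorem \ref{uniqueness2} (applicable because $v^2\in L_5(Q_{T_0})$); the continuity in $L_3$ then comes for free from Definition \ref{Kato}. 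You instead propose to derive the Duhamel representation of $v^2$ directly from the distributional formulation (\ref{perturbsystem}) together with (\ref{weakcount}) and (\ref{incon}), and to read the decay $\|v^2(\cdot,t)\|_{3}\to 0$ from the mapping properties of $\mathcal G$ on $L_{5/2}(Q_{T_0})$ (which indeed hold, as shown in the Appendix). This is viable, but the identification step you single out as the main obstacle is exactly where the work lies: you must prove a uniqueness statement for the linear Stokes problem in the energy class with an $L_{5/2}$ forcing, justifying the elimination of $q^2$ and the integration by parts in time against $e^{(t-s)\Delta}\mathbb P\varphi$. The paper avoids re-proving this linear uniqueness by routing the identification through the nonlinear weak--strong uniqueness already established in Theorem \ref{uniqueness2}; if you replace your Duhamel derivation by an appeal to Corollary \ref{mild} (or, equivalently, apply Theorem \ref{uniqueness2} to $v$ and the mild solution of the Appendix on $Q_{T_0}$), your argument closes with no remaining gap. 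What your route would buy, if completed, is independence from the explicit construction of the mild solution; what the paper's route buys is that the only analytic input beyond Theorem \ref{AppendixtoUni1} is machinery already in place.
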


Our paper is rather expository and some statements in it have been already known. We  prove them in order to demonstrate how our new conception of weak $L_3$-solutions works and that it is in a good accordance with the previous definitions of solutions to the Cauchy problem (\ref{system})--(\ref{ic}). We recommend  papers
\cite{Cannone}, \cite{Galdi}, \cite{Giga1986}, \cite{Kato} and monographs \cite{Ladyzhenskaya} and \cite{LR1}
for more details and references.


The paper is organized as follows. In the second section, the exitence of weak $L_3$-solutions is proven. Sequences of weak $L_3$-solutions are studied in the third secion. The uniqueness of weak $L_3$-solutions and related questions are discussed in fourth section. To make the paper more or less self-contained
, we give a simple proof of the existense of mild solutions with the initial data from $L_3(\mathbb R^3)$ in the Appendix.

\setcounter{equation}{0}
\section{Existence}

In this section, we are going to prove Theorem \ref{existence}.

The  first step of our proof  is to solve the problem in bounded domains $\Omega=B(R)$. We do this in a standard way by considering  several simple linear
problems and applying Leray-Schauder principle.

Assume that
\begin{equation}\label{521}
    a\in {\stackrel{\circ}{J}}(\Omega),
\end{equation}
where the space ${\stackrel{\circ}{J}}(\Omega)$ is the completion of $C^\infty_{0,0}(\Omega)$ in $L_2(\Omega)$.
\begin{pro}\label{52p2}
Let $Q_T=\Omega\times ]0,T[$ and
\begin{equation}\label{523}
   w^1, w\in L_\infty(Q_T),\qquad {\rm div}\,w=0\quad{in}\,\,Q_T.
\end{equation}
There exists a unique solution  $u$ to the initial boundary value problem
\begin{eqnarray}\label{524}
\partial_tu-\Delta u+{\rm div}\,u\otimes w+\nabla p=&\nonumber\\=-{\rm div}(w^1\otimes u+u\otimes w^1+w^1\otimes w^1),\quad{\rm div}\,u=0\quad {in}\,\,Q_T,   &\nonumber \\
 u|_{\partial\Omega\times [0,T]}=0,  &\\
 u|_{t=0}=a  &\nonumber
\end{eqnarray}
in the following sense:
$$u\in C([0,T];L_2(\Omega))\cap L_2(0,T;{\stackrel{\circ}{J}}{^1_2}(\Omega)),\quad\partial_t u\in L_2(0,T;({\stackrel{\circ}{J}}{^1_2}(\Omega))');$$
for a.a. $t\in [0,T]$
\begin{eqnarray}\label{525}
   \int\limits_\Omega (\partial_t u(x,t)\cdot \widetilde{v}(x)+\nabla u(x,t): \nabla   \widetilde{v}(x))dx &\nonumber\\
=\int\limits_\Omega ( u(x,t)\otimes w(x,t)+w^1(x,t)\otimes
u(x,t)+&\nonumber\\+u(x,t)\otimes w^1(x,t)+w^1(x,t)\otimes w^1(x,t)): \nabla   \widetilde{v}(x)dx &
\end{eqnarray}
for all $\widetilde{v}\in {\stackrel{\circ}{J}}{^1_2}(\Omega)$;
\begin{equation}\label{526}
    \|u(\cdot,t)-a(\cdot)\|_{2,\Omega}\to 0
\end{equation}
as $t\to +0$.
\end{pro}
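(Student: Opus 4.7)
The plan is to proceed by Galerkin approximation, which is natural since the system (\ref{524}) is \emph{linear} in the unknown $u$ (the fields $w$ and $w^1$ are prescribed $L_\infty$ data). First, fix an orthonormal basis $\{e_k\}_{k\geq 1}$ of $\stackrel{\circ}{J}(\Omega)$ consisting of eigenfunctions of the Stokes operator; this basis is automatically orthogonal in $\stackrel{\circ}{J}{^1_2}(\Omega)$. For each $N$ I would seek $u^N(x,t)=\sum_{k=1}^N c_k^N(t) e_k(x)$ satisfying (\ref{525}) for every $\widetilde v\in\mathrm{span}\{e_1,\dots,e_N\}$, with $c_k^N(0)=(a,e_k)_{L_2}$. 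This reduces the problem to a linear ODE system for the $c_k^N$ with $L_\infty$-in-time coefficients, hence admitting a unique absolutely continuous solution on $[0,T]$.

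To obtain uniform bounds, test the Galerkin system with $\widetilde v=u^N(\cdot,t)$. The crucial cancellation is
\[
\int_\Omega (u^N\otimes w):\nabla u^N\,dx=\int_\Omega w_j\,\partial_j\!\left(\tfrac{|u^N|^2}{2}\right)dx=0,
\]
using ${\rm div}\,w=0$ and the homogeneous boundary condition. The remaining three terms coming from $w^1$ are bounded, via H\"older and Young, by
\[
\tfrac12\|\nabla u^N\|_2^2+C(\|w^1\|_\infty,|\Omega|)\bigl(1+\|u^N\|_2^2\bigr),
\]
so the resulting differential inequality $\tfrac{d}{dt}\tfrac12\|u^N\|_2^2+\tfrac12\|\nabla u^N\|_2^2\le C(1+\|u^N\|_2^2)$ and Gronwall give uniform bounds in $L_\infty(0,T;L_2(\Omega))\cap L_2(0,T;\stackrel{\circ}{J}{^1_2}(\Omega))$ depending only on $\|a\|_2$, $\|w^1\|_\infty$, $|\Omega|$ and $T$. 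A direct duality estimate applied to (\ref{525}) also yields a uniform bound on $\partial_t u^N$ in $L_2(0,T;(\stackrel{\circ}{J}{^1_2}(\Omega))')$.

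Passing to the limit is then standard: extract a subsequence with $u^N\rightharpoonup u$ weakly-* in $L_\infty(0,T;L_2)$ and weakly in $L_2(0,T;\stackrel{\circ}{J}{^1_2})$, and $\partial_t u^N\rightharpoonup \partial_t u$ in the dual space. Since every term on the right of (\ref{525}) depends \emph{linearly} on $u$ and the test function $\widetilde v$ is fixed, weak convergence suffices to pass to the limit, after first restricting to $\widetilde v$ in $\mathrm{span}\{e_k\}_{k\le M}$ for fixed $M$ and then using density. Attainment of the initial datum (\ref{526}) and the continuity $u\in C([0,T];L_2(\Omega))$ follow from the strong energy identity, which is legitimate because $\langle\partial_t u,u\rangle=\tfrac12\tfrac{d}{dt}\|u\|_2^2$ in the distributional sense under the regularity we have. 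The pressure $p$ is recovered a posteriori by a de Rham type argument from the fact that the difference of the two sides of the momentum equation, integrated against divergence-free test functions, vanishes.

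Uniqueness is immediate from linearity: if $u$ and $\widetilde u$ are two solutions, then $d=u-\widetilde u$ solves the same system with $a=0$ and with the inhomogeneity $-{\rm div}\,(w^1\otimes w^1)$ removed, so the same energy computation (with all boundary terms gone since $d=0$ at $t=0$) gives $\|d(\cdot,t)\|_2^2\le C\int_0^t\|d(\cdot,s)\|_2^2\,ds$, and Gronwall forces $d\equiv 0$. I do not expect a genuine obstacle in this proof, as the system is linear and the bounded-domain setting with $L_\infty$ coefficients is classical; the only minor subtlety worth stating carefully is the handling of the non-solenoidal field $w^1$, which unlike $w$ produces no cancellation and must simply be absorbed into the dissipation via Young's inequality.
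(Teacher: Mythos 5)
Your argument is correct, but it takes a genuinely different route from the paper. The paper does not use a Galerkin scheme here: it sets $X=L_2(0,T;{\stackrel{\circ}{J}}(\Omega))$, defines an affine map $v=A(u)$ by solving the forced Stokes/heat system with right-hand side $\widetilde f=-{\rm div}(u\otimes w+w^1\otimes u+u\otimes w^1+w^1\otimes w^1)\in L_2(0,T;({\stackrel{\circ}{J}}{^1_2}(\Omega))')$ (whose unique solvability is taken as known), and then verifies continuity, compactness and the uniform bound on the set $\{v=\lambda A(v),\ \lambda\in[0,1]\}$ so as to invoke the Leray--Schauder principle; the a priori bound rests on the same cancellation $\int_\Omega(w\cdot\nabla v)\cdot v\,dx=0$ that you exploit, and uniqueness is extracted from the Gronwall-type continuity estimate for $A$, exactly as in your last paragraph. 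What your Galerkin construction buys is self-containedness: you build the solution from scratch (in effect proving the linear solvability the paper assumes), and for a problem that is linear in $u$ the fixed-point machinery is indeed dispensable. What the paper's choice buys is uniformity of presentation: the identical scaffolding (same space $X$, same operator $A$, same continuity/compactness/a priori bound checklist) is reused verbatim in Proposition 2.3 for the genuinely nonlinear mollified system, where a fixed-point theorem is no longer avoidable by linearity alone. The only points in your write-up worth stating with a little more care are (i) the cancellation $\int_\Omega(u^N\otimes w):\nabla u^N\,dx=0$ uses that $|u^N|^2/2$ has zero trace together with ${\rm div}\,w=0$ in the distributional sense (the $L_\infty$ field $w$ itself need not vanish on $\partial\Omega$), and (ii) the uniform bound on $\partial_t u^N$ in $L_2(0,T;({\stackrel{\circ}{J}}{^1_2}(\Omega))')$ requires the projection onto ${\rm span}\{e_1,\dots,e_N\}$ to be bounded in ${\stackrel{\circ}{J}}{^1_2}(\Omega)$ uniformly in $N$, which is precisely why the Stokes eigenfunction basis, orthogonal in both $L_2$ and ${\stackrel{\circ}{J}}{^1_2}$, must be used; neither point is a gap, but both deserve a sentence.
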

Here, we have used the notation ${\stackrel{\circ}{J}}{^1_r}(\Omega)$ for the completion of $C^\infty_{0,0}(\Omega)$ in the Sobolev space $W^1_r(\Omega)$.

\begin{proof} We are going to apply the Leray-Schauder principle. To this end, let
$$X=L_2(0,T;{\stackrel{\circ}{J}}(\Omega)).$$
Given $u\in X$, define $v=A(u)$ as a solution to the following problem:
\begin{equation}\label{527}
 v\in C([0,T];L_2(\Omega))\cap L_2(0,T;{\stackrel{\circ}{J}}{^1_2}(\Omega)),\quad\partial_t v\in L_2(0,T;({\stackrel{\circ}{J}}{^1_2}(\Omega))');
\end{equation}
for a.a. $t\in [0,T]$
\begin{eqnarray}\label{528}
   \int\limits_\Omega (\partial_t v(x,t)\cdot \widetilde{v}(x)+\nabla v(x,t): \nabla   \widetilde{v}(x))dx &\nonumber\\
=\int\limits_\Omega\widetilde{f}(x,t)\cdot
 \widetilde{v}(x)dx &
\end{eqnarray}
for all $\widetilde{v}\in {\stackrel{\circ}{J}}{^1_2}(\Omega)$;
\begin{equation}\label{529}
    \|v(\cdot,t)-a(\cdot)\|_{2,\Omega}\to 0
\end{equation}
as $t\to +0$. Here, $\widetilde{f}=-{\rm div}(u\otimes w+w^1\otimes u+u\otimes w^1+w^1\otimes w^1)$.

Such a function $v$ exists and is unique (for given $u$) 
 since
$$\widetilde{f}\in L_2(0,T;({\stackrel{\circ}{J}}{^1_2}(\Omega))').$$
So, the operator $A$ is well defined. Let us check that it satisfies all the requirements of the Leray-Schauder principle.

\noindent
\underline{Continuity}: Let $v^1=A(u^1)$ and $v^2=A(u^2)$. Then
$$\int\limits_\Omega (\partial_t (v^1-v^2)\cdot \widetilde{v}+\nabla (v^1-v^2): \nabla   \widetilde{v})dx=\int\limits_\Omega \Big((u^1-u^2)\otimes w+w^1\otimes (u^1-u^2)+
$$$$+(u^1-u^2)\otimes w^1\Big):\nabla   \widetilde{v}dx$$
and  letting  $\widetilde{v}=v^1-v^2$, we find
$$\frac 12\partial_t\|v^1-v^2\|^2_{2,\Omega}+\|\nabla v^1-\nabla v^2\|^2_{2,\Omega}\leq c(w,w^1)\|u^1-u^2\|_{2,\Omega}\|\nabla v^1-\nabla v^2\|_{2,\Omega}$$ and thus
$$\sup\limits_{0<t<T}\|v^1-v^2\|_{2,\Omega}\leq c(w,w^1)\|u^1-u^2\|_{2,Q_T}.$$
The latter implies continuity.

\noindent
\underline{Compactness}: As in the previous case, we use the energy estimate
$$\sup\limits_{0<t<T}\|v\|^2_{2,\Omega}+\|\nabla v\|^2_{2,\Omega}\leq c(w,w^1)(\|u\|^2_{2,\Omega}+1).
$$
The second estimate comes from (\ref{528}) and has the form
$$\|\partial_tv\|^2_{L_2(0,T;({\stackrel{\circ}{J}}{^1_2}(\Omega))')}\leq \|\nabla v\|^2_{Q_T}+c(w,w^1)(\|u\|^2_{2,\Omega}+1).
$$
Combining the above bounds, we observe that sets, which are bounded in $X$, remain to be bounded in
$$W=\{w\in L_2(0,T;{\stackrel{\circ}{J}}{^1_2}(\Omega)),\quad \partial_tw\in
L_2(0,T;({\stackrel{\circ}{J}}{^1_2}(\Omega))')\}.$$

Now, 
for $v=\lambda A(v)$ with $\lambda\in [0,1]$, after integration by parts, we find that, for a.a. $t\in [0,T]$, the identity
$$\int\limits_\Omega (\partial_t v\cdot \widetilde{v}+\nabla v: \nabla   \widetilde{v})dx=\lambda\int\limits_\Omega(w^1\otimes v+v\otimes w^1+w^1\otimes w^1):\nabla\widetilde{v}- (w\cdot\nabla v)\cdot \widetilde{v}) dx$$ holds for any $\widetilde{v}\in {\stackrel{\circ}{J}}{^1_2}(\Omega)$.
If we insert $\widetilde{v}(\cdot)=v(\cdot,t)$ into the latter relation, then another identity
$$\int\limits_\Omega(w\cdot\nabla v)\cdot v dx=0$$
ensures the following estimate:
$$\frac 12 \partial_t\int\limits_\Omega|v|^2dx+\int\limits_\Omega|\nabla v|^2dx\leq c(w^1)(\|v\|_{2,\Omega}+1)\|\nabla v\|_{2,\Omega}.$$
Hence,
$$\partial_t\int\limits_\Omega|v|^2dx\leq c(w^1)(\|v\|^2_{2,\Omega}+1)$$
and thus
$$\|v\|^2_{Q_T}\leq c(\|w^1\|_{\infty,\Omega}, T,\|a\|^2_{2,\Omega})=R^2.$$
Now, all the statements of Proposition \ref{52p2} follow from the Leray-Schauder principle. \index{Leray-Schauder principle}
Proposition \ref{52p2} is proved. \end{proof}

Let  $\omega_\varrho$ be a standard mollifier  and let
$$(u)_\varrho(x,t)=\int\limits_\Omega\omega_\varrho(x-x')u(x',t)dx', \qquad(v^1)_\varrho(x,t)=\int\limits_{\mathbb R^3}\omega_\varrho(x-x')v^1(x',t)dx'.$$
It is easy to check  that ${\rm div} (u)_\varrho(\cdot,t)=0$  if $t\mapsto u(\cdot,t)\in {\stackrel{\circ}{J}}(\Omega)$.

Now, we wish to show that, given $\varrho>0$, there exists at least one function $u^\varrho$ such that:
\begin{equation}\label{5210} u^\varrho\in C([0,T];L_2(\Omega))\cap L_2(0,T;{\stackrel{\circ}{J}}{^1_2}(\Omega)),\quad\partial_t u^\varrho\in L_2(0,T;({\stackrel{\circ}{J}}{^1_2}(\Omega))');\end{equation}
for a.a. $t\in [0,T]$
\begin{eqnarray}\label{5211}
   \int\limits_\Omega (\partial_t u^\varrho(x,t)\cdot \widetilde{v}(x)+\nabla u^\varrho(x,t): \nabla   \widetilde{v}(x))dx &\nonumber\\
=\int\limits_\Omega ( u^\varrho(x,t)\otimes (u^\varrho)_\varrho(x,t)+(v^1)_\varrho(x,t)\otimes u^\varrho(x,t)+&\nonumber\\u^\varrho(x,t)\otimes (v^1)_\varrho(x,t)+(v^1)_\varrho(x,t)\otimes (v^1)_\varrho(x,t)): \nabla   \widetilde{v}(x)dx\end{eqnarray}
for all $\widetilde{v}\in {\stackrel{\circ}{J}}{^1_2}(\Omega)$;
\begin{equation}\label{5212}
    \|u^\varrho(\cdot,t)-a(\cdot)\|_{2,\Omega}\to 0
\end{equation}
as $t\to +0$.

We  notice that (\ref{5210})-(\ref{5212}) can be regarded as a weak form of the following initial boundary value problem
\begin{eqnarray}\label{5213}
\partial_tu^\varrho-\Delta u^\varrho+(u^\varrho)_\varrho\cdot \nabla u^\varrho+\nabla p^\varrho=f,\quad{\rm div}\,u^\rho=0\quad {in}\,\,Q_T,   &\nonumber \\
 u^\varrho|_{\partial\Omega\times [0,T]}=0,  &\\
 u^\varrho|_{t=0}=a  &\nonumber
\end{eqnarray}
with $f=-{\rm div}(u^\varrho \otimes(v^1)_\varrho + (v^1)_\varrho \otimes u^\varrho +(v^1)_\varrho\otimes (v^1)_\varrho).$

\begin{pro}\label{52p3} There exists a unique function $u^\varrho$ defined on $Q_\infty
$
 such that  it satisfies (\ref{5210})-(\ref{5212}) for any $T>0$.  
\end{pro}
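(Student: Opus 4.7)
The plan is to apply the Leray--Schauder principle once more, using Proposition \ref{52p2} as the solution operator for the linearization. Fix $T>0$ and set $X=L_2(0,T;{\stackrel{\circ}{J}}(\Omega))$. For $u\in X$, let $A(u)=v$ denote the unique solution provided by Proposition \ref{52p2} with the choices $w:=(u)_\varrho$ and $w^1:=(v^1)_\varrho$; a fixed point of $A$ is then precisely a solution of (\ref{5210})--(\ref{5212}). A minor technical point is that $(u)_\varrho$ sits in $L_2(0,T;L_\infty(\Omega))\cap L_2(0,T;C^1(\Omega))$ rather than in $L_\infty(Q_T)$, but an inspection of the proof of Proposition \ref{52p2} shows that its conclusions persist under this weaker hypothesis on $w$, since the argument only uses $w\otimes u\in L_2(Q_T)$ together with ${\rm div}\,w=0$.

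Continuity of $A$ follows exactly as in Proposition \ref{52p2}, because $u\mapsto (u)_\varrho$ is continuous from $L_2(Q_T)$ into itself and into $L_2(0,T;L_\infty)$. For compactness, the energy and dual-norm bounds from Proposition \ref{52p2} map sets bounded in $X$ into sets bounded in
$$W=\{v\in L_2(0,T;{\stackrel{\circ}{J}}{^1_2}(\Omega)):\ \partial_t v\in L_2(0,T;({\stackrel{\circ}{J}}{^1_2}(\Omega))')\},$$
which is compactly embedded in $X$ by the Aubin--Lions lemma. For the a priori bound required by Leray--Schauder, suppose $v=\lambda A(v)$ with $\lambda\in[0,1]$. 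Testing the equation for $v$ by $v$ itself, the advective term drops out thanks to the key cancellation
$$\int\limits_\Omega \big((v)_\varrho\cdot\nabla v\big)\cdot v\,dx=0,$$
which is valid because ${\rm div}\,(v)_\varrho=0$ whenever ${\rm div}\,v=0$. The remaining terms are controlled as in the proof of Proposition \ref{52p2}, yielding via Gronwall a uniform bound $\|v\|_X\leq R(T,\|a\|_{2,\Omega},\|(v^1)_\varrho\|_{\infty,Q_T})$. Leray--Schauder then produces a fixed point $u^\varrho\in X$, which satisfies (\ref{5210})--(\ref{5212}) on $[0,T]$.

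Since $T>0$ is arbitrary and solutions on overlapping intervals agree by uniqueness (see below), one obtains a global solution $u^\varrho$ on $Q_\infty$. For uniqueness on a fixed $[0,T]$, let $u_1^\varrho,u_2^\varrho$ be two solutions and set $u=u_1^\varrho-u_2^\varrho$. Then $u$ satisfies a linear equation whose bilinear part is
$${\rm div}\,\big(u\otimes(u_1^\varrho)_\varrho+u_2^\varrho\otimes(u)_\varrho+u\otimes(v^1)_\varrho+(v^1)_\varrho\otimes u\big),$$
with $u(\cdot,0)=0$. Testing with $u$, the contribution of $u\otimes(u_1^\varrho)_\varrho$ and of $(v^1)_\varrho\otimes u$ vanish by divergence-freeness, while the remaining terms are estimated using the mollifier bound $\|(u)_\varrho\|_{\infty,\Omega}\leq c(\varrho)\|u\|_{2,\Omega}$, giving
$$\tfrac12\tfrac{d}{dt}\|u\|_{2,\Omega}^2+\|\nabla u\|_{2,\Omega}^2\leq c(\varrho)\big(\|u_2^\varrho(\cdot,t)\|_{2,\Omega}+\|v^1\|_{\infty,Q_T}\big)\|u\|_{2,\Omega}\|\nabla u\|_{2,\Omega}.$$
Absorbing $\|\nabla u\|_{2,\Omega}^2$ on the right and applying Gronwall (with $u(0)=0$) gives $u\equiv 0$.

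The main obstacle is mostly bookkeeping: one must carefully verify that the divergence-free structure is preserved by the mollification, as it is exactly this that enables the cancellation in the a priori estimate; conceptually nothing beyond Proposition \ref{52p2} is required.
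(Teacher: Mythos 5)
Your argument follows the paper's proof essentially verbatim: the same Leray--Schauder scheme with the linear solver of Proposition \ref{52p2} applied to $w=(u)_\varrho$, $w^1=(v^1)_\varrho$, the same cancellation $\int_\Omega((v)_\varrho\cdot\nabla v)\cdot v\,dx=0$ for the a priori bound, the same Gronwall uniqueness, and the same exhaustion $T_k\to\infty$ to pass to $Q_\infty$. The only (harmless for this particular statement) difference is that your a priori bound is allowed to depend on $\varrho$ through $\|(v^1)_\varrho\|_{\infty,Q_T}$, whereas the paper takes care to derive the $\varrho$-independent energy estimate (\ref{5219}) at this stage because it is reused later when letting $\varrho\to0$.
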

 \begin{proof} Let us fix an arbitrary $T>0$.

 To simplify our notation, let us  drop upper index $\varrho$ for a moment. The idea is the same as in Proposition \ref{52p2}: to use the Leray-Schauder principle. \index{Leray-Schauder principle} The space $X$ is the same as in the proof of Proposition \ref{52p2}. But the operator $A$ will be defined in a different way: given $u\in X$, we are looking for $w=A(u)$ so that
 \begin{equation}\label{5215} w\in C([0,T];L_2(\Omega))\cap L_2(0,T;{\stackrel{\circ}{J}}{^1_2}(\Omega)),\quad\partial_t w\in L_2(0,T;({\stackrel{\circ}{J}}{^1_2}(\Omega))');\end{equation}
for a.a. $t\in [0,T]$
\begin{eqnarray}\label{5216}
   \int\limits_\Omega (\partial_t w(x,t)\cdot \widetilde{v}(x)+\nabla w(x,t): \nabla   \widetilde{v}(x))dx &\nonumber\\
=\int\limits_\Omega ( w(x,t)\otimes (u)_\varrho(x,t)+(v^1)_\varrho(x,t)\otimes w(x,t)+&\nonumber\\
+w(x,t)\otimes (v^1)_\varrho(x,t)+(v^1)_\varrho(x,t)\otimes (v^1)_\varrho(x,t)): \nabla   \widetilde{v}(x)dx &
\end{eqnarray}
for all $\widetilde{v}\in {\stackrel{\circ}{J}}{^1_2}(\Omega)$;
\begin{equation}\label{5217}
    \|w(\cdot,t)-a(\cdot)\|_{2,\Omega}\to 0
\end{equation}
as $t\to +0$.
By Proposition \ref{52p2}, such a function exists and is unique. 

 \noindent
 \underline{Continuity}: Do the same as in Proposition \ref{52p2}:

 $$ I:=\frac 12 \partial_t\|w^2-w^1\|^2_{2,\Omega}+\|\nabla (w^2-w^1)
 \|^2_{2,\Omega} $$$$=\int\limits_\Omega\Big(w^2\otimes (u^2)_\varrho-w^1\otimes (u^1)_\varrho\Big):\nabla (w^2-w^1)dx $$$$+\int\limits_\Omega\Big((v^1)_\varrho\otimes (w^2-w^1)+(w^2-w^1)\otimes (v^1)_\varrho\Big):\nabla (w^2-w^1)dx$$  $$
= \int\limits_\Omega (w^2-w^1)\otimes (u^2)_\varrho:\nabla (w^2-w^1)dx $$$$ + \int\limits_\Omega   w^1\otimes (u^2-u^1)_\varrho:\nabla (w^2-w^1)dx$$$$+\int\limits_\Omega\Big((v^1)_\varrho\otimes (w^2-w^1)+(w^2-w^1)\otimes (v^1)_\varrho\Big):\nabla (w^2-w^1)dx .$$
The first integral in the right hand side of the above identity vanishes. Hence, 
 $$I\leq 
 \|(u^2-u^1)_\varrho\|_{\infty,\Omega}
 \|w^1\|_{2,\Omega}\|\nabla (w^2-w^1)\|_{2,\Omega}$$$$+c\|(v^1)_\varrho\|_{\infty,\Omega}\| (w^2-w^1)\|_{2,\Omega}\|\nabla (w^2-w^1)\|_{2,\Omega}.$$
 It follows from the H\"older inequality that
 $$\partial_t\|w^2-w^1\|^2_{2,\Omega}+\|\nabla (w^2-w^1)
 \|^2_{2,\Omega}\leq$$$$\leq  c(\varrho)\|w^1\|^2_{2,\Omega}\|u^2-u^1\|^2_{2,\Omega}+c(\varrho)\|v_0\|^2_{3,\mathbb R^3}\| (w^2-w^1)\|^2_{2,\Omega}$$
 and thus
 $$|w^2-w^1|^2_{2,Q_T}\leq c(\varrho,T,\|w^1\|_{2,\infty,Q_T},\|v_0\|_{3,\mathbb R^3})
 \|u^2-u^1\|^2_{2,Q_T}. $$
 The latter gives us continuity.

\noindent
 \underline{Compactness}: In our case, the usual energy estimate implies the following:
\begin{eqnarray}\label{5218}
\frac 12\partial_t\|w\|^2_{2,\Omega}+\|\nabla w\|^2_{2,\Omega}
=- \int\limits_\Omega \Big(w\otimes (u)_\varrho+(v^1)_\varrho\otimes w+&\nonumber\\
+w\otimes (v^1)_\varrho+(v^1)_\varrho\otimes(v^1)_\varrho\Big):\nabla wdx=&\nonumber\\
=- \int\limits_\Omega \Big((v^1)_\varrho\otimes w+w\otimes (v^1)_\varrho+(v^1)_\varrho\otimes(v^1)_\varrho\Big):\nabla wdx&\nonumber\\
\leq c(\|(v^1)_\varrho\|_{5,\Omega}\|w\|_{\frac {10}3,\Omega}+
\|(v^1)_\varrho\|^2_{4,\Omega})\|\nabla w\|_{2,\Omega}\leq\\
\leq c(\|v^1\|_{5,\Omega}\|w\|_{\frac {10}3,\Omega}+
\|v^1\|^2_{4,\Omega})\|\nabla w\|_{2,\Omega}\leq\nonumber\\
\leq c\|v^1\|_{5,\Omega}\|w\|^\frac 25_{2,\Omega}\|\nabla w\|_{2,\Omega}^\frac 85+c\|v^1\|^2_{4,\Omega}\|\nabla w\|_{2,\Omega}\leq \nonumber\\
\leq \frac 12 \|\nabla w\|^2_{2,\Omega}+c\|v^1\|^5_{5,\Omega}\|w\|^2_{2,\Omega}+c\|v^1\|^4_{4,\Omega}.\nonumber\end{eqnarray}
Since
$$\|v^1\|_{4,\Omega} \leq \|v^1\|_{3,\Omega}^\frac 38\|v^1\|_{5,\Omega}^\frac 58,$$
we have
$$\partial_t\|w\|^2_{2,\Omega}+\|\nabla w\|^2_{2,\Omega}\leq c\|v^1\|^5_{5,\Omega}\|w\|^2_{2,\Omega}+c\|v^1\|_{3,\Omega}^\frac 32\|v^1\|_{5,\Omega}^\frac 52.$$
Then
$$\partial_t\Big(\|w\|^2_{2,\Omega}\exp{\Big(-c\int\limits^t_0\|v^1\|_{5,\Omega}^5ds\Big)}\Big)\leq $$$$\leq c\|v^1\|_{3,\Omega}^\frac 32\|v^1\|_{5,\Omega}^\frac 52\exp{\Big(-c\int\limits^t_0\|v^1\|_{5,\Omega}^5ds\Big)}$$
and thus
$$\|w\|^2_{2,\Omega}\leq \|a\|^2_{2,\Omega}\exp{\Big(c\int\limits^t_0\|v^1\|_{5,\Omega}^5ds\Big)}+
$$
$$+c\int\limits^t_0\|v^1(\tau)\|_{3,\Omega}^\frac 32\|v^1(\tau)\|_{5,\Omega}^\frac 52\exp{\Big(c\int\limits^t_\tau\|v^1(s)\|_{5,\Omega}^5ds\Big)}d\tau\leq$$
$$\leq  \|a\|^2_{2,\Omega}\exp{\Big(c\int\limits^t_0\|v^1\|_{5,\Omega}^5ds\Big)}+$$
$$+c\|v^1\|^\frac 32_{3,\infty,Q_T}\int\limits^t_0\|v^1(\tau)\|_{5,\Omega}^\frac 52\exp{\Big(c\int\limits^t_\tau\|v^1(s)\|_{5,\Omega}^5ds\Big)}d\tau.$$
Now, a rough estimate looks like:
$$\|w\|^2_{2,\Omega}\leq \exp{\Big(c\int\limits^t_0\|v^1\|_{5,\Omega}^5ds\Big)}\Big(\|a\|^2_{2,\Omega}+c\|v^1\|^\frac 32_{3,\infty,Q_T}\int\limits^t_0\|v^1(\tau)\|_{5,\Omega}^\frac 52d\tau\Big)\leq$$
$$\leq \exp{\Big(c\int\limits^t_0\|v^1\|_{5,\Omega}^5ds\Big)}\Big(\|a\|^2_{2,\Omega}+c\|v^1\|^\frac 32_{3,\infty,Q_T}\sqrt {t}\|v^1\|_{5,Q_T}^\frac 52\Big).$$
So, we have
$$\|w\|^2_{2,\infty,Q_T}\leq \exp{\Big(c\|v^1\|^5_{5,Q_T}\Big)}
\Big(\|a\|^2_{2,\Omega}+c\|v^1\|^\frac 32_{3,\infty,Q_T}\sqrt {T}\|v^1\|_{5,Q_T}^\frac 52\Big)$$
and
$$\|\nabla w\|^2_{2,Q_T}\leq \|a\|^2_{2,\Omega}+c\|w\|^2_{2,\infty,Q}\|v^1\|^5_{5,Q_T}+c\|v^1\|^\frac 32_{3,\infty,Q_T}\sqrt{T}\|v^1||^\frac 52_{5,Q_T}.$$
Hence, the required energy estimate takes the form
\begin{equation}\label{5219}
|w|^2_{2,Q_T}\leq c(\|v^1\|_{5,Q_T},\|v^1\|_{3,\infty,Q_T},T, \|a\|_{2,\Omega}),\end{equation}
where a constant $c$ is independent of $\varrho$.
\begin{remark}\label{1remark}
If $a=0$, then we can see how constants depends on $T$
\begin{equation}\label{5219a}
|w|^2_{2,Q_T}\leq c(\|v_0\|_{3,\infty})\sqrt{T}.
\end{equation}

\end{remark}

Now, we need to evaluate the first derivative in time. Indeed,
$$
\|\partial_tw\|^2_{({\stackrel{\circ}{J}}{^1_2}(\Omega))'}\leq c\|\nabla w\|^2_{2,\Omega}+
c\int\limits_\Omega\Big(|w|^2|(u)_\varrho|^2+|(v^1)_\varrho |^2|w|^2+|(v^1)_\varrho |^4\Big)dx$$$$
\leq c\|\nabla w\|^2_{2,\Omega}
+c(\varrho)\| w\|^2_{2,\infty,Q_T}\|u\|^2_{2,\Omega}+
c\int\limits_\Omega\Big(|(v^1)_\varrho |^2|w|^2+|(v^1)_\varrho |^4\Big)dx\leq $$
$$\leq c\|\nabla w\|^2_{2,\Omega}
+c(\varrho)\| w\|^2_{2,\infty,Q_T}\|u\|^2_{2,\Omega}+c(\|(v^1)_\varrho\|^2_{5,\Omega}\|w\|^2_{\frac {10}3,\Omega}+
\|(v^1)_\varrho\|^4_{4,\Omega})\leq $$$$\leq c\|\nabla w\|^2_{2,\Omega}
+c(\varrho)\| w\|^2_{2,\infty,Q_T}\|u\|^2_{2,\Omega}+
c(\|v\|^2_{5,\Omega}\|w\|^2_{\frac {10}3,\Omega}+
\|v^1\|^4_{4,\Omega})$$
Then
$$\|\partial_tw\|^2_{L_2(0,T;({\stackrel{\circ}{J}}{^1_2}(\Omega))')}\leq c\|\nabla w\|^2_{2,Q_T}+c(\varrho)\| w\|^2_{2,\infty,Q_T}\|u\|^2_{2,Q_T}+$$
$$+c\|v\|^2_{5,Q_T}\|w\|^2_{\frac {10}3,Q_T}+c\|v^1\|^\frac 32_{3,\infty,Q_T}\sqrt{T}\|v^1||^\frac 52_{5,Q_T}$$
and thus
$$\|\partial_tw\|^2_{L_2(0,T;({\stackrel{\circ}{J}}{^1_2}(\Omega))')}\leq$$
\begin{equation}\label{5220}
\leq c(\varrho,\|v^1\|_{5,Q_T},\|v^1\|_{3,\infty,Q_T},T, \|a\|_{2,\Omega})\Big(1+\|u\|^2_{2,Q_T}\Big).
\end{equation}
 Making use of similar arguments as in the proof of Proposition \ref{52p2}, we conclude that for each fixed $\varrho>0$ the operator $A$ is compact.

Now, 
for $w=\lambda A(w)$ with $\lambda\in [0,1]$, after integration by parts, we find that, for a.a. $t\in [0,T]$,
$$\int\limits_\Omega (\partial_t w\cdot \widetilde{v}+\nabla w: \nabla   \widetilde{v})dx=\lambda\int\limits_\Omega((v^1)_\varrho\otimes w+w\otimes (v^1)_\varrho+$$$$
+(v^1)_\varrho\otimes (v^1)_\varrho):\nabla\widetilde{v}- ((w)_\varrho\cdot\nabla w)\cdot \widetilde{v}) dx$$ for any $\widetilde{v}\in {\stackrel{\circ}{J}}{^1_2}(\Omega)$.
If we insert $\widetilde{v}(\cdot)=w(\cdot,t)$ into the latter relation, then the identity
$$\int\limits_\Omega((w)_\varrho\cdot\nabla w)\cdot w dx=0$$
and previous arguments ensure  the estimate (\ref{5219}).
Now, let us prove the uniqueness for for fixed $\varrho$ and $T$. Coming back to the proof of continuity of the operator $A$, we find
$$\partial_t\|u^2-u^1\|^2_{2,\Omega}+\|\nabla (u^2-u^1)
 \|^2_{2,\Omega}\leq $$$$\leq c(\varrho)\|u^1\|^2_{2,\Omega}\|u^2-u^1\|^2_{2,\Omega}+c(\varrho)\|v_0\|^2_{3,\mathbb R^3}\| (u^2-u^1)\|^2_{2,\Omega}.$$
From this, it follows that $u^1=u^2$ on the interval $]0,T[$. Selecting a a sequence of $T_k\to\infty$ we can construct a unique function $u$ satisfying all statements of the proposition.
Proposition \ref{52p3} is proved. \end{proof}

Now, we wish to extend statements of Proposition \ref{52p3} to
$\Omega=\mathbb R^3$. From now on, let us assume that $a=0$.
\begin{pro}\label{approxwholespace}
Given $\varrho>0$, there exists a unique function $u$, defined on $Q_\infty=\mathbb R^3\times ]0,\infty[$, such that, for any $T>0$, $u\in W^{1,0}_{2,\infty}(Q_T)$ and  $\partial_tu\in L_2(Q_T)$, and $u$ satisfies the idenity
$$\int\limits_{Q_\infty}(-u\cdot\partial_tw+\nabla u:\nabla w)dxdt=$$
\begin{equation}\label{identityvar}
=\int\limits_{Q_\infty}(u\otimes (u)_\varrho+(v^1)_\varrho\otimes u+u\otimes (v^1)_\varrho+(v^1)_\varrho\otimes
(v^1)_\varrho):\nabla wdxdt\end{equation}
for any $w\in C^\infty_{0,0}(Q_\infty)$ and the initial condition $u(\cdot,0)=0$.\end{pro}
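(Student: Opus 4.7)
The plan is to construct $u$ as the $R\to\infty$ limit of the solutions $u_R$ produced by Proposition \ref{52p3} on expanding balls $\Omega_R=B(R)$, and to obtain uniqueness from a Gronwall argument that exploits the extra regularity afforded by the mollification. For each $R>0$, I would apply Proposition \ref{52p3} with $\Omega=B(R)$ and $a=0$ to produce a unique solution $u_R$ on $B(R)\times]0,\infty[$, extended by zero outside $B(R)$. By Remark \ref{1remark}, the bound
$$|u_R|^2_{2,Q_T}\leq c(\|v_0\|_{3,\mathbb R^3})\sqrt{T}$$
holds uniformly in $R$ for each fixed $\varrho>0$. Together with the convolution estimate $\|(f)_\varrho\|_{\infty,\mathbb R^3}\leq c(\varrho)\|f\|_{2,\mathbb R^3}$, the weak formulation (\ref{5211}) then delivers a bound on $\partial_t u_R$ in $L_2(0,T;({\stackrel{\circ}{J}}{^1_2}(\mathbb R^3))')$ that is uniform in $R$.

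Next, a diagonal Aubin--Lions extraction along an exhausting sequence of compact subsets of $Q_\infty$ produces a subsequence converging weakly-$\ast$ in $L_\infty(0,T;L_2(\mathbb R^3))$, weakly in $L_2(0,T;{\stackrel{\circ}{J}}{^1_2}(\mathbb R^3))$, and strongly in $L_2(K)$ for every compact $K\subset Q_\infty$. Because convolution with $\omega_\varrho$ is $L_2\to L_\infty$ continuous and commutes with convergence, $(u_{R_k})_\varrho\to (u)_\varrho$ in $L_\infty$ on compact subsets of $Q_\infty$, which is precisely what is needed to pass to the limit in each of the four quadratic terms on the right-hand side of (\ref{identityvar}). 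Since the test functions $w\in C^\infty_{0,0}(Q_\infty)$ are divergence-free and compactly supported, the pressure never enters the limiting identity, and the initial condition $u(\cdot,0)=0$ is inherited from the uniform bound $\|u_{R_k}(\cdot,t)\|_{2,\mathbb R^3}\leq c\sqrt{t}$. The additional regularity $\partial_t u\in L_2(Q_T)$ claimed in the statement then follows from the mollified equation, since $((u)_\varrho\cdot\nabla)u$ and the $(v^1)_\varrho$-quadratic terms all lie in $L_2(Q_T)$ once one has $u\in L_\infty(0,T;L_2)\cap L_2(0,T;{\stackrel{\circ}{J}}{^1_2})$.

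For uniqueness, let $u^1$ and $u^2$ be two functions satisfying the conclusion and set $d=u^1-u^2$. Using ${\rm div}\,d=0$ and the cancellation $\int_{\mathbb R^3}((u^1)_\varrho\cdot\nabla d)\cdot d\,dx=0$, testing the difference equation against $d$ and invoking the uniform $L_\infty$ bounds on $(u^i)_\varrho$ and $(v^1)_\varrho$ yields
$$\partial_t\|d\|^2_{2,\mathbb R^3}+\|\nabla d\|^2_{2,\mathbb R^3}\leq c(\varrho,\|v_0\|_{3,\mathbb R^3})\|d\|^2_{2,\mathbb R^3},$$
which combined with $d(\cdot,0)=0$ forces $d\equiv 0$ by Gronwall.

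The main obstacle is the compactness step: because $\mathbb R^3$ is unbounded and translation-invariant, energy-bounded sequences are not globally precompact, so one has to be content with strong convergence on compact subsets only. The restriction to compactly supported divergence-free test functions in (\ref{identityvar}) is exactly what makes such local convergence sufficient, and it simultaneously removes the pressure from the limiting weak formulation.
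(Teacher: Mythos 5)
Your overall strategy coincides with the paper's: solve on the exhausting balls $\Omega_k=B(R_k)$ via Proposition \ref{52p3} with $a=0$, use the $\varrho$-independent energy bound of Remark \ref{1remark}, extract a limit that is strong on compact subsets of $Q_\infty$, pass to the limit in the quadratic terms, and prove uniqueness by the same Gronwall inequality (exploiting $\int((u^1)_\varrho\cdot\nabla d)\cdot d\,dx=0$ and $\|(f)_\varrho\|_\infty\le c(\varrho)\|f\|_{2}$, resp.\ $\le c(\varrho)\|f\|_3$). That part is fine.

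The one place where your argument does not deliver the full statement is the regularity claim $u\in W^{1,0}_{2,\infty}(Q_T)$ together with $\partial_tu\in L_2(Q_T)$. Your compactness step only yields $u\in L_\infty(0,T;L_2)\cap L_2(0,T;\stackrel{\circ}{J}{}^1_2)$, and you then assert that $\partial_tu\in L_2(Q_T)$ ``follows from the mollified equation''; but the identity (\ref{identityvar}) is only tested against divergence-free $w$, so to extract $\partial_t u$ in $L_2$ you must first reintroduce the pressure and invoke a coercive (Solonnikov-type) estimate for the Stokes system on $\mathbb R^3$ with right-hand side in $L_2$ -- a nontrivial tool you do not name. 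More importantly, you never address the $L_\infty(0,T;L_2)$ bound on $\nabla u$, which is part of $W^{1,0}_{2,\infty}(Q_T)$. The paper avoids both issues by proving the estimate \emph{before} passing to the limit: it tests the approximate equation on $\Omega_k$ with $\partial_tu^{(k)}$ and, using $|(u^{(k)})_\varrho|\le c(\varrho)\|u^{(k)}\|_{2,\infty}$, $|(v^1)_\varrho|+|\nabla(v^1)_\varrho|\le c(\varrho)\|v_0\|_{3}$ and $\|\nabla v^1(\cdot,t)\|_{3}\le ct^{-1/2}\|v_0\|_{3}$, obtains the uniform bound
\begin{equation*}
\|\partial_tu^{(k)}\|^2_{2,Q^k_T}+\|\nabla u^{(k)}\|^2_{2,\infty,Q^k_T}\le c(\varrho,T,\|v_0\|_{3,\mathbb R^3}),
\end{equation*}
which survives the limit and gives both missing pieces at once (and also strengthens the compactness you need). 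You should either carry out this a priori estimate at the level of $u^{(k)}$, or explicitly supply the Stokes maximal-regularity argument on the limit equation and derive $\nabla u\in L_{2,\infty}(Q_T)$ from $u\in W^{2,1}_2(Q_T)$.
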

 \begin{proof} Let $\varrho$ be fixed and $u^{(k)}$ is a sequence of solutions from Proposition \ref{52p3} for $\Omega_k=B(R_k)$ with $R_k\to\infty$.
 According to (\ref{5219a}), the  energy estimate
\begin{equation}\label{5219k}
|u^{(k)}|^2_{2,Q^k_T}\leq c(\|v_0\|_{3,\mathbb R^3})\sqrt{T}.\end{equation}
holds  for any $T>0$. Here, $Q^k_T=\Omega_k\times ]0,T[$.

Now, let us derive some additional estimates. First, we have
$$\int\limits_{\Omega_k}(|\partial_t u^{(k)}|^2+\frac 12 \partial_t|\nabla  u^{(k)}|^2)dx=-\int\limits_{\Omega_k}
\Big((u^{(k)})_\varrho\cdot \nabla u^{(k)}+(v^1)_\varrho\cdot\nabla u^{(k)}+$$$$+u^{(k)}\cdot \nabla (v^1)_\varrho+
(v^1)_\varrho\cdot \nabla (v^1)_\varrho\Big)\cdot\partial_t
u^{(k)}dx.$$
Moreover, it is easy to check
$$|(u^{(k)})_\varrho|\leq  c(\varrho)\|u^{(k)}\|_{2,\infty,Q^k_T},$$
$$|(v^1)_\varrho|\leq c(\varrho)\|v^1\|_{3,\infty,Q^k_T}\leq c(\varrho)\|v_0\|_{3,\mathbb R^3}.$$
$$|\nabla (v^1)_\varrho|\leq c(\varrho)\|v^1\|_{3,\infty,Q^k_T}\leq c(\varrho) \|v_0\|_{3,\mathbb R^3}.$$
Then, we find
$$\int\limits_{\Omega_k}(|\partial_t u^{(k)}|^2+\partial_t|\nabla  u^{(k)}|^2)dx\leq c(\varrho)\|u^{(k)}\|^2_{2,\infty,Q^k_T}\|\nabla u^{(k)}\|^2_{2,\Omega_k}+ $$$$+c(\varrho)\|v_0\|^2_{3,\mathbb R^3}\|\nabla u^{(k)}\|^2_{2,\Omega_k}+c(\varrho)
\|v_0\|^2_{3,\mathbb R^3}\|u^{(k)}\|^2_{2,\infty,Q^k_T}+$$$$+c(\varrho)\|v_0\|_{3,\mathbb R^3}\int\limits_{\Omega_k}|(v^1)_\varrho|^2|\nabla (v^1)_\varrho|dx\leq$$
$$\leq ...+c(\varrho)\|v_0\|_{3,\mathbb R^3}\|(v^1)_\varrho\|^2_{3,\Omega_k}\|(\nabla v^1)_\varrho\|_{3,\Omega_k}\leq $$
$$\leq ...+c(\varrho)\|v_0\|_{3,\mathbb R^3}\|v^1\|^2_{3,\Omega_k}\|\nabla v^1\|_{3,\Omega_k}\leq $$
$$\leq ...+c(\varrho)\frac 1{\sqrt{t}}\|v_0\|^4_{3,\mathbb R^3} .$$
Integration in $t$ 
 gives
\begin{equation}\label{strong}
\| \partial_t u^{(k)}\|^2_{2,Q^k_T}+\|\nabla  u^{(k)}\|_{2,\infty,Q^k_T}\leq c(\varrho, T, \|v_0\|_{3,\mathbb R^3})
.
\end{equation}

As usual, let us assume that functions $u^{(k)}$ are extended by zero to the whole space $\mathbb R^3$. Then, according
to (\ref{5219k}) and (\ref{strong}), one can select a subsequence (still denoted by $u^{(k)}$) such that
$$u^{(k)}\rightharpoonup u,\qquad \nabla u^{(k)}\rightharpoonup\nabla u, \qquad \partial_t u^{(k)}\rightharpoonup \partial_t u$$
in $L_2(Q_T)$ with $Q_T=\mathbb R^3\times ]0,T[$
and
$$u^{(k)}\to  u$$
for any set of the form $K\times ]0,T[$, where  $K$ is a compact in $ \mathbb R^3$. Moreover, the limit function $u$ satisfies estimates
\begin{equation}\label{energyrho}
|u|^2_{2,Q_T}\leq c(\|v_0\|_{3,\mathbb R^3})\sqrt{T}\end{equation}
and
$$\|\partial_t u\|^2_{2,Q_T}+\|\nabla  u\|_{2,\infty,Q_T}\leq c(\varrho, T, \|v_0\|_{3,\mathbb R^3})
$$
for any $T>0$  
and the identity (\ref{identityvar}).

It remains to prove the uniqueness. We have the same inequality as in the proof of the previous proposition
$$\partial_t\|u^2-u^1\|^2_{2,\mathbb R^3}+\|\nabla (u^2-u^1)
 \|^2_{2,\mathbb R^3}\leq $$$$\leq c(\varrho)\|u^1\|^2_{2,\mathbb R^3}
 \|u^2-u^1\|^2_{2,\mathbb R^3}+c(\varrho)\|v_0\|^2_{3,\mathbb R^3}\| (u^2-u^1)\|^2_{2,\mathbb R^3},$$
which implies the required property. Proposition \ref{approxwholespace} is proven. \end{proof}

Now, we are going to prove the main theorem by passing to the limit  as $\varrho\to0$.
To this end, we shall split $ u$ into four parts
 in the following way:
$$ u=u^{2,1}+u^{2,2}+u^{2,3}+u^{2,4}$$
so that, for $i=1,2,3,4$,
$$\partial_tu^{2,i}-\Delta u^{2,i}+\nabla p^{2,i}= f^i,\qquad \mbox{div}\,u^{2,i}=0$$
in $Q_\infty$,
$$u^{2,i}(x,0)=0$$
for $x\in \mathbb R^3$, where
$$ f^1:=- (u)_\varrho\cdot\nabla u,\quad f^2:=- u\cdot \nabla (v^1)_\varrho, $$$$ f^3:=- (v^1)_\varrho\cdot\nabla u,\quad f^4:=-(v^1)_\varrho\cdot \nabla (v^1)_\varrho.$$
We now can the introduce the pressure $p=p^\varrho$ so that
$$p=p^{2,1}+p^{2,2}+p^{2,3}+p^{2,4}.$$

Let us start with evaluation of $u^{2,1}$. Here, our main tool  is the Solonnikov coercive estimates of the linear theory. 
One can use the standard consequences of the energy bound, the multiplicative inequalities, and H\"older inequality and find
\begin{equation}\label{time22}
\|\partial_tu^{2,1}\|_{s,l,Q_T}+\|\nabla^2u^{2,1}\|_{s,l,Q_T}+\|\nabla p^{2,1}\|_{s,l,Q_T}\leq $$$$\leq c(s)\|f^1\|_{s,l,Q_T}\leq C(s,\|v_0\|_{3,\mathbb R^3})\sqrt{T}
\end{equation}
provided
$$\frac 3s+\frac 2l=4.$$

For $i=2$, we may apply the known estimate of the heat
potential
\begin{equation}\label{standard}
\|\nabla v^1(\cdot,t)\|_{s,\mathbb R^3}\leq \frac c{t^{\frac 1r+\frac 12}}\|v_0(\cdot)\|_{3,\mathbb R^3}\end{equation}
with
$$\frac 1r=\frac 32\Big(\frac 13-\frac 1s\Big).$$
We take $s=4$ and try to estimate $\|f^2\|_{\frac 43,\frac 32,Q_T}$. Indeed,
we have
$$\|f^2(\cdot,t)\|_{\frac 43,\mathbb R^3}\leq \|u(\cdot,t)\cdot\nabla (v^1)_\varrho(\cdot,t)\|_{\frac 43}\leq$$$$\leq \|u(\cdot,t)\|_{2,\mathbb R^3}\|\nabla (v^1)_\varrho(\cdot,t)\|_{4,\mathbb R^3}\leq $$
$$\leq \|u(\cdot,t)\|_{2,\mathbb R^3}\|\nabla v^1(\cdot,t)\|_{4,\mathbb R^3}\leq$$
$$\leq \|u\|_{2,\infty,Q_T}c\frac 1{t^\frac {5}{8}}
\|v_0(\cdot)\|_{3,\mathbb R^3}.$$
Hence,
$$\|f^2\|_{\frac 43,\frac 32,\mathbb R^3}\leq \|u\|_{2,\infty,Q_T}c{T^\frac {1}{24}}
\|v_0(\cdot)\|_{3,\mathbb R^3}\leq c(\|v_0\|_{3,\mathbb R^3})T^\frac 7{24},$$
which implies
$$\|\partial_tu^{2,2}\|_{4/3,3/2,Q_{T}}+\|\nabla^2u^{2,2}\|_{4/3,3/2,Q_T}+\|\nabla p^{2,2}\|_{4/3,3/2,Q_T}\leq $$$$\leq c\|f^2\|_{4/3,3/2,Q_T}\leq C(\|v_0\|_{3,\mathbb R^3})T^\frac 7{24}.
$$

Next, let  $i=3$.  Then, by (\ref{energyrho}),
$$\|f^3(\cdot,t)\|_{6/5,\mathbb R^3}=\|(v^1)_\varrho(\cdot,t)\cdot\nabla u(\cdot,t)\|_{6/5,\mathbb R^3}\leq\|v^1(\cdot,t)\|_{3,\mathbb R^3}\|\nabla u(\cdot,t)\|_{2,\mathbb R^3}\leq $$$$ \leq c(\|v_0\|_{3,\mathbb R^3})\frac 1{t^\frac 14}$$
and thus
$$\|f^3\|_{6/5,3/2,\mathbb R^3}\leq T^\frac 5{12}c(\|v_0\|_{3,\mathbb R^3})
.$$
Applying a Solonnikov estimate one more time, we find
$$\|\partial_tu^{2,3}\|_{6/5,3/2,Q_{T}}+\|\nabla^2u^{2,3}\|_{6/5,3/2,Q_T}+\|\nabla p^{2,3}\|_{6/5,3/2,Q_T}\leq $$$$\leq c\|f^3\|_{6/5,3/2,Q_T}\leq C(\|v_0\|_{3,\mathbb R^3})T^\frac 5{12}.$$
Finally, for the last term, we have
$$\|f^4(\cdot,t)\|_{3/2,\mathbb R^3}:=\|(v^1)_\varrho(\cdot,t)\cdot \nabla (v^1)_\varrho(\cdot,t)\|_{3/2,\mathbb R^3}\leq $$
$$\leq \|v^1(\cdot,t)\|_{3,\mathbb R^3} \|\nabla v^1\|_{3,\mathbb R^3}\leq $$
$$\leq \frac c{t^\frac 12}\|v_0\|^2_{3,\mathbb R^3}$$
and thus
$$\|f^4\|_{3/2,Q_T}\leq cT^\frac 16\|v_0\|^2_{3,\mathbb R^3}.$$
Coercive Solonnikov estimates give
 \begin{equation}\label{timeu24}
\|\partial_tu^{2,4}\|_{3/2,Q_T}+\|\nabla^2u^{2,4}\|_{3/2,Q_T}+\|\nabla p^{2,4}\|_{3/2,Q_T}\leq $$$$\leq c\|f^4\|_{3/2,Q_T}\leq C(\|v_0\|_{3,\mathbb R^3})T^\frac 16.
\end{equation}



In what follows, we are going to use the following Poincare type inequalities:
\begin{equation}\label{pres1}
\int\limits^T_{0}\int\limits_{B(x_0,R)}|p^{2,1}-[p^{2,1}]_{B(x_0,R)}|^\frac 32dx dt\leq c R^\frac 12\int\limits^{T}_{0}\Big(\int\limits_{B(x_0,R)}|\nabla p^{2,1}|^\frac 98dx\Big)^\frac 43dt;\end{equation}
\begin{equation}\label{pres2}\int\limits^T_{0}\int\limits_{B(x_0,R)}|p^{2,2}-[p^{2,2}]_{B(x_0,R)}|^\frac 32dx dt\leq c R^\frac {9}8
\int\limits^T_{0}\Big(\int\limits_{B(x_0,R)}|\nabla p^{2,2}|^\frac 43dx \Big)^\frac 98dt;\end{equation}
\begin{equation}\label{pres3}\int\limits^T_{0}\int\limits_{B(x_0,R)}|p^{2,3}-[p^{2,3}]_{B(x_0,R)}|^\frac 32dx dt\leq c R^\frac 34\int\limits^{T}_{0}\Big(\int\limits_{B(x_0,R)}|\nabla p^{2,3}|^\frac 65dx\Big)^\frac 54dt;\end{equation}
\begin{equation}\label{pres4}
\int\limits^T_{0}\int\limits_{B(x_0,R)}|p^{2,4}-[p^{2,4}]_{B(x_0,R)}|^\frac 32 dx dt\leq cR^\frac 32\int\limits^T_0\int\limits_{B(x_0,R)}|\nabla p^{2,4}|^2dxdt.\end{equation}

Now, let us see what happens if $\varrho\to0$. We can select a subsequence (still denoted as the whole sequence) with the following properties: for any $T>0$,
$$u^\varrho\rightharpoonup u,\qquad \nabla u^\varrho\rightharpoonup\nabla u   $$
in $L_2(Q_T)$,
$$u^\varrho\to u$$
in $L_3(0,T;L_{3,\mbox{loc}}(\mathbb R^3))$,
$$p^\varrho\rightharpoonup p$$
in $L_{\frac 32}(0,T;L_{\frac 32,\mbox{loc}}(\mathbb R^3))$. Moreover, limit functions $u$ and $p$ satisfy the energy estimate
\begin{equation}\label{limitenergy}
|u|_{2,Q_T}\leq c(\|v_0\|_{3,\mathbb R^3})T^\frac 14
\end{equation}
and the Navier-Stokes equations in $Q_\infty$ in the sense of distributions.  From the estimates above, it follows that
the function
$$t\to\int\limits_{\mathbb R^3}u(x,t)\cdot w(x)dx$$
is continuous on $[0,\infty[$ for all $w\in L_2(\mathbb R^3)$.

Let us show that $u$ and $p$ satisfy the local energy inequality. Indeed, we have
$$\int\limits_{\mathbb R^3}\varphi^2(x,t) |u^\varrho(x,t)|^2dx+2\int\limits^t_0\int\limits_{\mathbb R^3}\varphi^2|\nabla u^\varrho|^2dxds\leq $$
$$\leq \int\limits^t_0\int\limits_{\mathbb R^3}\Big(|u^\varrho|^2(\Delta \varphi^2+\partial_t\varphi^2)+(u^\varrho)_\varrho\cdot\nabla \varphi^2(|u^\varrho|^2+2p^\varrho)+$$$$+(v^1)_\varrho\otimes u^\varrho:\nabla u^\varrho\varphi^2+(v^1)_\varrho\otimes u^\varrho:u^\varrho\otimes \nabla\varphi^2+$$
$$+u^\varrho\otimes (v^1)_\varrho:\nabla u^\varrho\varphi^2+u^\varrho\otimes (v^1)_\varrho:u^\varrho\otimes \nabla\varphi^2+$$$$+(v^1)_\varrho\otimes (v^1)_\varrho:\nabla u^\varrho\varphi^2+(v^1)_\varrho\otimes (v^1)_\varrho:u^\varrho\otimes \nabla\varphi^2\Big)dxds$$
for all $\varphi\in C^\infty_0(\mathbb R^4)$.

Further, we observe that
$$\|(u^\varrho)_\varrho-u\|_{3,B(R)\times ]0,T[}=
\|(u^\varrho-u)_\varrho-(u)_\varrho-u\|_{3,B(R)\times ]0,T[}\leq $$
$$\leq \|(u^\varrho-u)_\varrho\|_{3,B(R)\times ]0,T[}+\|(u)_\varrho-u\|_{3,B(R)\times ]0,T[}\leq $$
$$\leq \|(u^\varrho-u)\|_{3,B(2R)\times ]0,T[}+\|(u)_\varrho-u\|_{3,B(R)\times ]0,T[}\to 0$$
as $\varrho\to0$. This, of course, implies
$$\int\limits^t_0\int\limits_{\mathbb R^3}\Big(|u^\varrho|^2(\Delta \varphi^2+\partial_t\varphi^2)+(u^\varrho)_\varrho\cdot\nabla \varphi^2(|u^\varrho|^2+2p^\varrho)\Big)dxds\to$$
$$
\int\limits^t_0\int\limits_{\mathbb R^3}\Big(|u|^2(\Delta \varphi^2+\partial_t\varphi^2)+u\cdot\nabla \varphi^2(|u|^2+2p)\Big)dxds.
$$

Next, first, we notice that
$$\int\limits^t_0\int\limits_{\mathbb R^3}((v^1)_\varrho-v^1)\otimes u^\varrho:\nabla u^\varrho\varphi^2+((v^1)_\varrho-v^1)\otimes u^\varrho:u^\varrho\otimes \nabla \varphi^2\Big)dxds\to0$$
and
$$\int\limits^t_0\int\limits_{\mathbb R^3}\Big(v^1\otimes u^\varrho:\nabla u^\varrho\varphi^2+v^1\otimes u^\varrho:u^\varrho\otimes \nabla \varphi^2\Big)dxds=$$
$$=-\int\limits^t_0\int\limits_{\mathbb R^3}(u^\varrho\cdot\nabla v^1)\cdot u^\varrho\varphi^2dxds.$$
Now, let us consider the case
$$\varphi\in C^\infty_0(\mathbb R^3\times ]0,\infty[).$$
Here, of course, we have
$$-\int\limits^t_0\int\limits_{\mathbb R^3}(u^\varrho\cdot\nabla v^1)\cdot u^\varrho\varphi^2dxds\to-\int\limits^t_0\int\limits_{\mathbb R^3}(u\cdot\nabla v^1)\cdot u\varphi^2dxds=$$
$$=\int\limits^t_0\int\limits_{\mathbb R^3}\Big(v^1\otimes u:\nabla u\varphi^2+v^1\otimes u:u\otimes \nabla \varphi^2\Big)dxds.$$
The same arguments give
$$\int\limits^t_0\int\limits_{\mathbb R^3}\Big(u^\varrho\otimes (v^1)_\varrho:\nabla u^\varrho\varphi^2+u^\varrho\otimes (v^1)_\varrho:u^\varrho\otimes \nabla\varphi^2\Big)dxds\to$$
$$\int\limits^t_0\int\limits_{\mathbb R^3}\Big(u\otimes v^1:\nabla u\varphi^2+u\otimes v^1:u\otimes \nabla\varphi^2\Big)dxds$$
for any $\varphi\in C^\infty_0(\mathbb R^3\times ]0,\infty[)$.
The last term can be treated in the same manner and thus  the following estimate comes out:
$$\int\limits_{\mathbb R^3}\varphi^2(x,t) |u(x,t)|^2dx+2\int\limits^t_0\int\limits_{\mathbb R^3}\varphi^2|\nabla u|^2dxds\leq $$
$$\leq \int\limits^t_0\int\limits_{\mathbb R^3}\Big(|u|^2(\Delta \varphi^2+\partial_t\varphi^2)+u\cdot\nabla \varphi^2(|u|^2+2p)+$$$$+v^1\otimes u:\nabla u\varphi^2+v^1\otimes u:u\otimes \nabla\varphi^2+$$
$$+u\otimes v^1:\nabla u\varphi^2+u\otimes v^1:u\otimes \nabla\varphi^2+$$$$+v^1\otimes v^1:\nabla u\varphi^2+v^1\otimes v^1:u\otimes \nabla\varphi^2\Big)dxds$$
for any $\varphi\in C^\infty_0(\mathbb R^3\times ]0,\infty[)$.

In order to extend our local energy inequality to all function $\varphi\in C^\infty_0(\mathbb R^4)$, we take a function $\chi(t)$ such that $\chi(t)=1$ if $t>\varepsilon>0$ and $\chi(t)=0$ if $t<\varepsilon/2$ with $0\leq \chi'(t)\leq c/\varepsilon$.
Consider $\psi =\chi\varphi$ with $\varphi\in C^\infty_0(\mathbb R^4)$ as a cut-off function in the local energy inequality and see what happens if $\varepsilon\to 0$. The only term we should care of is the term containing the derivative in time:
$$\int\limits^t_0\int\limits_{\mathbb R^3}|u|^2\partial_t\psi^2dxds=\int\limits^t_\varepsilon \int\limits_{\mathbb R^3}\chi^2|u|^2\partial_t\varphi^2dxds+\int\limits^\varepsilon_0\int\limits_{\mathbb R^3}\varphi^2|u|^2\partial_t\chi^2dxds=I_1+I_2.$$
Obviously,
$$I_1\to\int\limits^t_0 \int\limits_{\mathbb R^3}|u|^2\partial_t\varphi^2dxds.$$
To estimates the second term, we can use the energy estimate:
$$|I_2|\leq c|u|^2_{2,\infty,Q_\varepsilon}\leq C(\|v_0\|_{3,\mathbb R^3}) \sqrt{\varepsilon}\to 0.$$
So, the local energy inequality is proven.

From the last inequality, we can deduce that
$$\int\limits_{\mathbb R^3}\varphi^2(x) |u(x,t)|^2dx+2\int\limits^t_0\int\limits_{\mathbb R^3}\varphi^2|\nabla u|^2dxds\leq $$
$$\leq \int\limits^t_0\int\limits_{\mathbb R^3}\Big(|u|^2\Delta \varphi^2+u\cdot\nabla \varphi^2(|u|^2+2p)+$$$$+v^1\otimes u:\nabla u\varphi^2+v^1\otimes u:u\otimes \nabla\varphi^2+$$
$$+u\otimes v^1:\nabla u\varphi^2+u\otimes v^1:u\otimes \nabla\varphi^2+$$$$+v^1\otimes v^1:\nabla u\varphi^2+v^1\otimes v^1:u\otimes \nabla\varphi^2\Big)dxds$$
for any $\varphi\in C^\infty_0(\mathbb R^3)$.

Now, we wish to get a global energy inequality. We can take a function $\varphi$  satisfying $0\leq \varphi\leq 1$ such that $\varphi(x)=1$ if $|x|<R$ and $\varphi(x)=0$ if $|x|>2R$ and $|\nabla \varphi(x)|<c/R$.

The only term to be treated carefully is
$$I= \int\limits^t_0\int\limits_{\mathbb R^3}u\cdot\nabla \varphi^2pdxds.$$

Indeed, we have
$$I=I_1+I_2+I_3+I_4,
$$
where
$$I_i=\int\limits^t_0\int\limits_{\mathbb R^3}u\cdot\nabla \varphi^2(p^{2,i}-[p^{2,i}]_{A(R)})dxds,$$
where $A(R):=B(2R)\setminus \overline{B}(R)$.
Then, by (\ref{pres1}), we have
$$|I_1|\leq \frac cR\|u\|_{3,A(R)\times ]0,t[} \Big(\int\limits^t_0\int\limits_{A(R)}|p^{2,1}-[p^{2,1}]_{A(R)}|^\frac 32dxds\Big)^\frac 23\leq$$
$$\leq \frac cR\|u\|_{3,A(R)\times ]0,t[}  R^\frac 13\Big(\int\limits^{t}_{0}\Big(\int\limits_{A(R)}|\nabla p^{2,1}|^\frac 98dx\Big)^\frac 43dt\Big)^\frac 23\to0$$
as $R\to\infty$.

As to
the second term, we use (\ref{pres2}) and show
$$|I_2|\leq \frac cR\|u\|_{3,A(R)\times ]0,t[} \Big(\int\limits^t_0\int\limits_{A(R)}|p^{2,2}-[p^{2,2}]_{A(R)}|^\frac 32dxds\Big)^\frac 23\leq$$
$$\leq \frac cR\|u\|_{3,A(R)\times ]0,t[}  R^\frac 34\Big(\int\limits^{t}_{0}\Big(\int\limits_{A(R)}|\nabla p^{2,2}|^\frac 43dx\Big)^\frac 98dt\Big)^\frac 23\to0$$
as $R\to\infty$.

From (\ref{pres3}) it follows that
$$|I_3|\leq \frac cR\|u\|_{3,A(R)\times ]0,t[} \Big(\int\limits^t_0\int\limits_{A(R)}|p^{2,3}-[p^{2,3}]_{A(R)}|^\frac 32dxds\Big)^\frac 23\leq$$
$$\leq \frac cR\|u\|_{3,A(R)\times ]0,t[}  R^\frac 12\Big(\int\limits^{t}_{0}\Big(\int\limits_{A(R)}|\nabla p^{2,3}|^\frac 65dx\Big)^\frac 54dt\Big)^\frac 23\to0$$
as $R\to\infty$.

Finally, for the fourth term, we derive from (\ref{pres4})
$$|I_4|\leq \frac cR\|u\|_{3,A(R)\times ]0,t[} \Big(\int\limits^t_0\int\limits_{A(R)}|p^{2,4}-[p^{2,4}]_{A(R)}|^\frac 32dxds\Big)^\frac 23\leq$$
$$\leq \frac cR\|u\|_{3,A(R)\times ]0,t[}  R\Big(\int\limits^{t}_{0}\int\limits_{A(R)}|\nabla p^{2,4}|^\frac 32dxdt\Big)^\frac 23\to0$$
as $R\to\infty$.

The global energy inequality has been proven. That's all.

\setcounter{equation}{0}
\section{Weak Convergence of Initial Data}

Here, we are going to prove Theorem \ref{weakconvergence}.

We know that  $v^{2(m)}$ satisfies the energy estimate
$$|v^{2(m)}|^2_{2,Q_T}\leq c(M)\sqrt{T}$$
for any $T>0$, where
$$M:=\sup\limits_m\|v^{(m)}_0\|_{3,\mathbb R^2}<\infty.$$



Then, one may proceed as in the proof of Theorem \ref{existence},   splitting  $v^{2(m)} $ and $  q^{2(m)} $ so that:
$$  v^{2(m)}=u^{2,1}+u^{2,2}+u^{2,3}+u^{2,4}$$
and
$$q^{2(m)}=p^{2,1}+p^{2,2}+p^{2,3}+p^{2,4},$$
and, for $i=1,2,3,4$,
$$\partial_tu^{2,i}-\Delta u^{2,i}+\nabla p^{2,i}= f^i,\qquad \mbox{div}\,u^{2,i}=0$$
in $Q_\infty$,
$$u^{2,i}(x,0)=0$$
for $x\in \mathbb R^3$, where
$$ f^1:=- v^{2(m)}\cdot\nabla v^{2(m)},\quad f^2:=- v^{2(m)}\cdot \nabla v^{1(m)}, $$$$ f^3:=- v^{1(m)}\cdot\nabla v^{2(m)},\quad f^4:=-v^{1(m)}\cdot \nabla v^{1(m)}.$$

To evaluate  $u^{2,1}$,  Solonnikov's coercive estimates 
and  the energy bound are used. As a result, we find
\begin{equation}\label{2,1}
\|\partial_tu^{2,1}\|_{s,l,Q_T}+\|\nabla^2u^{2,1}\|_{s,l,Q_T}+\|\nabla p^{2,1}\|_{s,l,Q_T}\leq $$$$\leq c(s)\|f^1\|_{s,l,Q_T}\leq C(s,M)\sqrt{T}
\end{equation}
provided
$$\frac 3s+\frac 2l=4.$$

If $i=2$, one can exploit estimates (\ref{standard}) for $v^1$ with $s=4$ and show
$$\|f^2(\cdot,t)\|_{\frac 43,\mathbb R^3}\leq \|v^{2(m)}(\cdot,t)\cdot\nabla v^{1(m)}(\cdot,t)\|_{\frac 43}\leq$$
$$\leq \|v^{2(m)}(\cdot,t)\|_{2,\mathbb R^3}\|\nabla v^{1(m)}(\cdot,t)\|_{4,\mathbb R^3}\leq $$
$$\leq \|v^{2(m)}\|_{2,\infty,Q_T}c\frac 1{t^\frac {5}{8}}
\|v^{(m)}_0(\cdot)\|_{3,\mathbb R^3}.$$
Hence,
$$\|f^2\|_{\frac 43,\frac 32,\mathbb R^3}\leq \|v^{2(m)}\|_{2,\infty,Q_T}c{T^\frac {1}{24}}
\|v^{(m)}_0(\cdot)\|_{3,\mathbb R^3}\leq c(M)T^\frac 7{24},$$
which implies
$$\|\partial_tu^{2,2}\|_{4/3,3/2,Q_{T}}+\|\nabla^2u^{2,2}\|_{4/3,3/2,Q_T}+\|\nabla p^{2,2}\|_{4/3,3/2,Q_T}\leq $$$$\leq c\|f^2\|_{4/3,3/2,Q_T}\leq C(M)T^\frac 7{24}.
$$

If $i=3$,  then similar arguments lead to the bound
$$\|\partial_tu^{2,3}\|_{6/5,3/2,Q_{T}}+\|\nabla^2u^{2,3}\|_{6/5,3/2,Q_T}+\|\nabla p^{2,3}\|_{6/5,3/2,Q_T}\leq $$$$\leq c\|f^3\|_{6/5,3/2,Q_T}\leq C(M)T^\frac 5{12}.$$

Finally, for the last term, we have
 \begin{equation}\label{timeu2,4}
\|\partial_tu^{2,4}\|_{3/2,Q_T}+\|\nabla^2u^{2,4}\|_{3/2,Q_T}+\|\nabla p^{2,4}\|_{3/2,Q_T}\leq $$$$\leq c\|f^4\|_{3/2,Q_T}\leq C(M)T^\frac 16.
\end{equation}

Now, let  $m\to\infty$. We can select a subsequence (still denoted as the whole sequence) such that, for any $T>0$,
$$v^{2(m)}\rightharpoonup v^2,\qquad \nabla v^{2(m)}\rightharpoonup\nabla v^2   $$
in $L_2(Q_T)$,
$$v^{2(m)}\to v^2$$
in $L_3(0,T;L_{3,\mbox{loc}}(\mathbb R^3))$,
$$q^{2(m)}\rightharpoonup q^2$$
in $L_{\frac 32}(0,T;L_{\frac 32,\mbox{loc}}(\mathbb R^3))$. Moreover, limit functions $v=v^1+v^2$ and $q=q^2$ satisfy the estimate
$$|v^2|_{2,Q_T}\leq c(M)T^\frac 14$$
and the Navier-Stokes equations in $Q_\infty$ in the sense of distributions.  It is easy to see that
the function
$$t\to\int\limits_{\mathbb R^3}v^2(x,t)\cdot w(x)dx$$
is continuous on $[0,\infty[$ for all $w\in L_2(\mathbb R^3)$.

Let us show that $v^2$ and $q^2$ satisfy the local energy inequality. Indeed, we have
$$\int\limits_{\mathbb R^3}\varphi^2(x,t) |v^{2(m)}(x,t)|^2dx+2\int\limits^t_0\int\limits_{\mathbb R^3}\varphi^2|\nabla v^{2(m)}|^2dxds $$
$$\leq \int\limits^t_0\int\limits_{\mathbb R^3}\Big(|v^{2(m)}|^2( \varphi^2+\partial_t\varphi^2)+v^{2(m)}\cdot\nabla \varphi^2(|v^{2(m)}|^2+2q^{2(m)})+$$$$+v^{1(m)}\otimes v^{2(m)}:\nabla v^{2(m)}\varphi^2+v^{1(m)}\otimes :v^{2(m)}\otimes \nabla\varphi^2+$$
$$+v^{2(m)}\otimes v^{1(m)}:\nabla v^{2(m)}\varphi^2+v^{2(m)}\otimes v^{1(m)}:v^{2(m)}\otimes \nabla\varphi^2+$$$$+v^{1(m)}\otimes v^{1(m)}:\nabla v^{2(m)}\varphi^2+v^{1(m)}\otimes v^{1(m)} :v^{2(m)}\otimes \nabla\varphi^2\Big)dxds$$
for all $\varphi\in C^\infty_0(\mathbb R^4)$.


The first thing to notice is:
$$\int\limits^t_0\int\limits_{\mathbb R^3}\Big(|v^{2(m)}|^2(\Delta \varphi^2+\partial_t\varphi^2)+v^{2(m)}\cdot\nabla \varphi^2(|v^{2(m)}|^2+2q^{2(m)})\Big)dxds\to$$
$$
\int\limits^t_0\int\limits_{\mathbb R^3}\Big(|v^2|^2(\Delta \varphi^2+\partial_t\varphi^2)+u\cdot\nabla \varphi^2(|v^2|^2+2q^2)\Big)dxds.
$$

As in the previous section, let us consider two cases. In the first one, it is assumed that
$$\varphi\in C^\infty_0(\mathbb R^3\times ]0,\infty[).$$
Then
$$\int\limits^t_0\int\limits_{\mathbb R^3}\Big((v^{1(m)}-v^1)\otimes v^{2(m)}:\nabla v^{2(m)}\varphi^2+$$$$+(v^{1(m)}-v^1)\otimes v^{2(m)}:v^{2(m)}\otimes \nabla \varphi^2\Big)dxds\to0$$
and
$$\int\limits^t_0\int\limits_{\mathbb R^3}\Big(v^1\otimes v^{2(m)}:\nabla v^{2(m)}\varphi^2+v^1\otimes v^{2(m)}:v^{2(m)}\otimes \nabla \varphi^2\Big)dxds=$$
$$=-\int\limits^t_0\int\limits_{\mathbb R^3}(v^{2(m)}\cdot\nabla v^1)\cdot v^{2(m)}\varphi^2dxds.$$

Next, one can observe that
$$-\int\limits^t_0\int\limits_{\mathbb R^3}(v^{2(m)}\cdot\nabla v^1)\cdot v^{2(m)}\varphi^2dxds\to-\int\limits^t_0\int\limits_{\mathbb R^3}(v^2\cdot\nabla v^1)\cdot v^2\varphi^2dxds=$$
$$=\int\limits^t_0\int\limits_{\mathbb R^3}\Big(v^1\otimes v^2:\nabla v^2\varphi^2+v^1\otimes v^2:v^2\otimes \nabla \varphi^2\Big)dxds.$$
The same arguments imply
$$\int\limits^t_0\int\limits_{\mathbb R^3}\Big(v^{2(m)}\otimes v^{1(m)}:\nabla v^{2(m)}\varphi^2+v^{2(m)}\otimes v^{1(m)}:v^{2(m)}\otimes \nabla\varphi^2\Big)dxds\to$$
$$\int\limits^t_0\int\limits_{\mathbb R^3}\Big(v^2\otimes v^1:\nabla v^2\varphi^2+v^2\otimes v^1:v^2\otimes \nabla\varphi^2\Big)dxds$$
for any $\varphi\in C^\infty_0(\mathbb R^3\times ]0,\infty[)$.
The last term is treated in the same manner. Hence,
$$\int\limits_{\mathbb R^3}\varphi^2(x,t) |v^2(x,t)|^2dx+2\int\limits^t_0\int\limits_{\mathbb R^3}\varphi^2|\nabla v^2|^2dxds\leq $$
$$\leq \int\limits^t_0\int\limits_{\mathbb R^3}\Big(|v^2|^2(\Delta \varphi^2+\partial_t\varphi^2)+v^2\cdot\nabla \varphi^2(|v^2|^2+2q^2)+$$$$+v^1\otimes v^2:\nabla v^2\varphi^2+v^1\otimes v^2:v^2\otimes \nabla\varphi^2+$$
$$+v^2\otimes v^1:\nabla v^2\varphi^2+v^2\otimes v^1:v^2\otimes \nabla\varphi^2+$$$$+v^1\otimes v^1:\nabla v^2\varphi^2+v^1\otimes v^1:v^2\otimes \nabla\varphi^2\Big)dxds$$
for any $\varphi\in C^\infty_0(\mathbb R^3\times ]0,\infty[)$. In order to extend our local energy inequality to all function $\varphi\in C^\infty_0(\mathbb R^4)$, we can exploit the same cut-off function $\chi(t)$ as in the proof of the existence theorem.
Letting $\psi =\chi\varphi$ with $\varphi\in C^\infty_0(\mathbb R^4)$, we observe that the only term that should be treated carefully is the term containing the derivative in time. Indeed, for example,
consider the term
$$\int\limits^t_0\chi^2g(s)ds,$$
where
$$g(t)=\int\limits_{\mathbb R^3}v^1\otimes v^2:\nabla v^2\varphi^2dx.$$ We need to show that
$$A:=\int\limits^t_0|g(s)|ds<\infty.$$
Indeed,
$$A\leq \int\limits^t_0\int\limits_{\mathbb R^3}|v^1||v^2||\nabla v^2|dxds\leq \int\limits_0^t\|v^1\|_{5,\mathbb R^3}\|v^2\|_{\frac {10}3,\mathbb R^3}\|\nabla v^2\|_{2,\mathbb R^3}ds\leq$$$$\leq \int\limits_0^t\|v^1\|_{5,\mathbb R^3}\|v^2\|^\frac 25_{2,\mathbb R^3}\|\nabla v^2\|^\frac 85_{2,\mathbb R^3}ds\leq \|v^2\|^\frac 25_{2,\infty,\mathbb R^3}\|v^1\|_{5,Q_t}\|\nabla v^2\|^\frac 85_{2,Q_t}.$$

Now, let us make the evaluation of the most important term
$$\int\limits^t_0\int\limits_{\mathbb R^3}|v^2|^2\partial_t\psi^2dxds=\int\limits^t_\varepsilon \int\limits_{\mathbb R^3}\chi^2|v^2|^2\partial_t\varphi^2dxds+\int\limits^\varepsilon_0\int\limits_{\mathbb R^3}\varphi^2|v^2|^2\partial_t\chi^2dxds=$$$$=I_1+I_2.$$
Obviously,
$$I_1\to\int\limits^t_0 \int\limits_{\mathbb R^3}|v^2|^2\partial_t\varphi^2dxds.$$
To estimates the second term,  the energy estimate is used and, therefore, $$|I_2|\leq c|v^2|^2_{2,\infty,Q_\varepsilon}\leq C(M) \sqrt{\varepsilon}\to 0.$$
So, the local energy inequality has been proven and takes the form:
$$\int\limits_{\mathbb R^3}\varphi^2(x) |v^2(x,t)|^2dx+2\int\limits^t_0\int\limits_{\mathbb R^3}\varphi^2|\nabla v^2|^2dxds\leq $$
$$\leq \int\limits^t_0\int\limits_{\mathbb R^3}\Big(|v^2|^2\Delta \varphi^2+v^2\cdot\nabla \varphi^2(|v^2|^2+2q^2)+$$$$+v^1\otimes v^2:\nabla v^2\varphi^2+v^1\otimes v^2:v^2\otimes \nabla\varphi^2+$$
$$+v^2\otimes v^1:\nabla v^2\varphi^2+v^2\otimes v^1:v^2\otimes \nabla\varphi^2+$$$$+v^1\otimes v^1:\nabla v^2\varphi^2+v^1\otimes v^1:v^2\otimes \nabla\varphi^2\Big)dxds$$
for any $\varphi\in C^\infty_0(\mathbb R^3)$.

From the latter relation we can deduce the global energy inequality, using the same arguments as in the proof of the existence theorem.
Hence, the limit function $v=v^1+v^2$ is  a weak $L_3$-solution starting with initial data $v_0$.

\setcounter{equation}{0}
\section{Uniqueness}
Let us start with a proof of Proposition \ref{uniqueness1}.


\begin{proof}   Our first remark is that, given $\varepsilon>0$ and $R>0$, 
there exists a number
$R_*(T,R,\varepsilon)  >0$ such that
if $B(x_0,R)\subset \mathbb R^3\setminus B(R_*)$ and $t_0-R^2>0$ then
$$\frac 1{R^2}\int\limits_{Q(z_0,R)}(|v|^3+|q-[q]_{B(x_0,R)}|^\frac 32)dx dt \leq \varepsilon .$$
For $v$ and $q^1=0$, it is certainly true. For $q^2$, we can use arguments similar to those used in the previous section. Indeed, if $q^2=p^{2,1}+p^{2,2}+p^{2,3}+p^{2,4}$, then, for example, we have
$$\frac 1{R^2}\int\limits_{Q(z_0,R)}|p^{2,1}-[p^{2,1}]_{B(x_0,R)}|^\frac 32dxds\leq
$$
$$ \leq\frac 1{R^2}\int\limits^T_0\int\limits_{B(x_0,R)}|p^{2,1}-[p^{2,1}]_{B(x_0,R)}|^\frac 32dxds\leq \frac 1{R^\frac 32}\int\limits^{T}_{0}\Big(\int\limits_{B(x_0,R))}|\nabla p^{2,1}|^\frac 98dx\Big)^\frac 43dt\leq $$$$\leq \frac 1{R^\frac 32}\int\limits^{T}_{0}\Big(\int\limits_{\mathbb R^3\setminus B(R_*))}|\nabla p^{2,1}|^\frac 98dx\Big)^\frac 43dt\to0$$
as $R_*\to\infty$ for any fixed $R>0$. Since $v\in L_{3,\infty}(Q_T)$, it is not so difficult to show that the pair $v$ and $q^2$ satisfies the local energy inequality (in fact, the local energy identity) and thus,
by $\varepsilon$-regularity theory developed    in \cite{CKN}, we can claim that
$$|v(z_0)|\leq \frac cR$$
as long as $z_0$ and $R$ satisfy the conditions above. According to \cite{ESS2003}, $v$ is locally bounded as it belongs to $L_{3,\infty}(Q_T)$. Therefore, we can ensure that $v\in L_\infty(Q_{\delta,T})$ for any $\delta>0$. Here, $Q_{\delta,T}=\mathbb R^3\times ]\delta,T[$. Then, we can easily show that, for any $\delta>0$,
$v^2\in W^{2,1}_{2}(Q_{\delta,T})$, $\nabla v^2\in L_{2,\infty}(Q_{\delta,T})$, and $\nabla q^2\in L_2(Q_{\delta,T})$. The latter allows us to state that   the energy identity
$$\frac 12\int\limits_{\mathbb R^3}|v^2(x,t)|^2dx+\int\limits^t_0\int\limits_{\mathbb R^3}|\nabla v^2|^2dxds= \int\limits^t_0\int\limits_{\mathbb R^3}v^1\otimes v:\nabla v^2
dxds$$
holds for any $t>0$ and, moreover,
$$ \int\limits_{\mathbb R^3}\Big(\partial_tv^2(x,t)\cdot w(x)+(v(x,t)\cdot\nabla v(x,t))\cdot w(x)+\nabla v^2(x,t):\nabla w(x)\Big)dx=0 $$
for any $w\in C^\infty_{0,0}(\mathbb R^3)$ and for a.a. $t\in ]0,t[$.

As $\widetilde v^2$, we have
$$\int\limits_{Q_T}\Big(-\widetilde v^2\cdot \partial_tw-\widetilde v\otimes \widetilde v:\nabla w+\nabla \widetilde v^2:\nabla w\Big)dxdt=0$$
for any $w\in C^\infty_{0,0}(Q_T)$.
 Using known approximative arguments, see for example \cite{mybook}, we deduce from the last identity that
 $$\int\limits_\delta^{t_0}\Big(-\widetilde v^2\partial_t v^2-\widetilde v\otimes \widetilde v:\nabla v^2+\nabla \widetilde v^2:\nabla v^2\Big)dxdt+$$
$$+\int\limits_{\mathbb R^3}\widetilde v^2(x,t_0)\cdot v^2(x,t_0)dx-\int\limits_{\mathbb R^3}\widetilde v^2(x,\delta)\cdot v^2(x,\delta)dx=0$$
for any $t_0\in [\delta,T]$.

Let $w=\widetilde v^2-v^2$. Then
$$\int\limits_\delta^{t_0}\int\limits_{\mathbb R^3}\partial_tv^2\cdot \widetilde v^2dxdt-\frac 12\int\limits_{\mathbb R^3}|v^2(x,t_0)|^2dx+\frac 12\int\limits_{\mathbb R^3}|v^2(x,\delta)|^2dx+$$
$$+\int\limits_\delta^{t_0}\int\limits_{\mathbb R^3}\Big(-v\otimes v:\nabla w+\nabla v^2:\nabla w\Big)dxdt=0.$$

Adding the last two identities, we find
$$\int\limits_{\mathbb R^3}\Big[\widetilde v^2(x,t_0)\cdot v^2(x,t_0)-\widetilde v^2(x,\delta)\cdot v^2(x,\delta)-\frac 12|v^2(x,t_0)|^2+\frac 12|v^2(x,\delta)|^2\Big]dx+$$
$$+\int\limits_\delta^{t_0}\int\limits_{\mathbb R^3}\Big(-v\otimes v:\nabla w-\widetilde v\otimes \widetilde v:\nabla v^2+\nabla v^2:\nabla w+\nabla \widetilde v^2:\nabla v^2\Big)dxdt=0$$
or
$$\int\limits_{\mathbb R^3}\Big[\widetilde v^2(x,t_0)\cdot v^2(x,t_0)-\frac 12|v^2(x,t_0)|^2\Big]dx+$$
$$+\int\limits_0^{t_0}\int\limits_{\mathbb R^3}\Big(-v\otimes v:\nabla w-\widetilde v\otimes \widetilde v:\nabla v^2+\nabla v^2:\nabla w+\nabla \widetilde v^2:\nabla v^2\Big)dxdt=\alpha(\delta),$$
where $\alpha(\delta)\to 0$ as $\delta\to0$.

We also know that   $\widetilde v^2$ satisfies the energy inequality
$$\frac 12 \int\limits_{\mathbb R^3}|\widetilde v^2(x,t_0)|^2dx+\int\limits_0^{t_0}\int\limits_{\mathbb R^3}|\nabla \widetilde v^2|^2dxdt\leq$$
$$\leq  \int\limits_0^{t_0}\int\limits_{\mathbb R^3}\widetilde v\otimes \widetilde v:\nabla \widetilde v^2dxdt.$$
Subtracting the previous identity from the last inequality, we show
$$\frac 12  \int\limits_{\mathbb R^3}|w(x,t_0)|^2dx+\int\limits_0^{t_0}\int\limits_{\mathbb R^3}|\nabla \widetilde v^2|^2dxdt\leq $$$$\leq  \int\limits_0^{t_0}\int\limits_{\mathbb R^3}\widetilde v\otimes \widetilde v:\nabla \widetilde v^2dxdt+$$
$$+\int\limits_0^{t_0}\int\limits_{\mathbb R^3}\Big(-v\otimes v:\nabla w-\widetilde v\otimes \widetilde v:\nabla v^2+\nabla v^2:\nabla w+\nabla \widetilde v^2:\nabla v^2\Big)dxdt-\alpha(\delta)$$
and then, passing to the limit as $\delta\to0$, we find
$$\frac 12  \int\limits_{\mathbb R^3}|w(x,t_0)|^2dx+\int\limits_0^{t_0}\int\limits_{\mathbb R^3}|\nabla w|^2dxdt\leq $$
$$\leq \int\limits_0^{t_0}\int\limits_{\mathbb R^3}\Big(\widetilde v\otimes \widetilde v:\nabla w- v\otimes v:\nabla w\Big)dxdt=\int\limits_0^{t_0}\int\limits_{\mathbb R^3}(w\otimes v+v\otimes w):\nabla wdxdt.$$
So, finally,
$$\int\limits_{\mathbb R^3}|w(x,t_0)|^2dx+\int\limits_0^{t_0}\int\limits_{\mathbb R^3}|\nabla w|^2dxdt\leq$$$$\leq  c\int\limits_0^{t_0}\int\limits_{\mathbb R^3}|v^1|^2|w|^2dxdt+c\int\limits_0^{t_0}\int\limits_{\mathbb R^3}|v^2|^2|w|^2dxdt=cI_1+cI_2.$$

Estimate for $I_1$ has been already derived:
$$I_1\leq c\Big(\int\limits_0^{t_0}\int\limits_{\mathbb R^3}|v^1(y,t)|^5dy\int\limits_{\mathbb R^3}|w(x,t)|^2dxdt\Big)^\frac 25
\Big(\int\limits_0^{t_0}\int\limits_{\mathbb R^3}|\nabla w|^2dxdt\Big)^\frac 35\leq $$$$\leq c\Big(\int\limits_0^{t_0}g^1(t)\int\limits_{\mathbb R^3}|w(x,t)|^2dxdt\Big)^\frac 25
\Big(\int\limits_0^{t_0}\int\limits_{\mathbb R^3}|\nabla w|^2dxdt\Big)^\frac 35 , $$
where
$$g^1(t):=\int\limits_{\mathbb R^3}|v^1(y,t)|^5dy.$$

To estimate the second term, we are going to exploit condition (\ref{maincond}) that implies the existence of $\delta\in ]0, T_1]$ with the following property:
$$\|v^2\|_{3,\infty, Q_{\delta}}\leq 2\mu.$$
Then for $t_0\leq \delta$, we have
$$I_2\leq \int\limits_0^{t_0}\|v^2\|^2_{3,\mathbb R^3}\|w\|^2_{6,\mathbb R^3}dt\leq$$
$$\leq c\|v^2\|^2_{3,\infty,Q_{t_0}}\int\limits_0^{t_0}\int\limits_{\mathbb R^3}|\nabla w|^2dxdt\leq 4\mu^2c\int\limits_0^{t_0}\int\limits_{\mathbb R^3}|\nabla w|^2dxdt.$$
Letting
$$8\mu^2c=1,$$
 we find
$$\int\limits_{\mathbb R^3}|w(x,t_0)|^2dx+\int\limits_0^{t_0}\int\limits_{\mathbb R^3}|\nabla w|^2dxdt\leq c\int\limits_0^{t_0}g^1(t)\int\limits_{\mathbb R^3}|w(x,t)|^2dxdt$$
for $0<t_0\leq\delta.$
Hence, $w=0$ on the interval $]0,\delta[$. On the other hand, we know that $v\in L_5(Q_{\delta,T}) $ and thus $v^2\in L_5(Q_{\delta,T}) $ and the same arguments as above give $w=0$ on the whole interval $]0,T[$.
\end{proof}

Now, we wish to prove Theorem \ref{AppendixtoUni1}.
\begin{proof}
It is enough to show that there exists a weak $L_3$-solution $u$ that belongs to $L_5(Q_{T_0})$  for some $T_0>0$ with the same initial data. Indeed, by Theorem \ref{uniqueness2}, $v=u$ in $Q_{T_0}$. To this end, let us go back to our approximating solution $v^\varrho$. For this smooth solution, we have the known estimate
$$\|v^{2,\varrho}\|_{5,Q_{T_0}}\leq c \|((v^1)_\varrho+(v^{2,\varrho})_\varrho)\otimes ((v^1)_\varrho+v^{2,\varrho})\|_{5,Q_{T_0}} $$
with an absolute constant $c$. So, we have
$$\|v^{2,\varrho}\|_{5,Q_{T_0}}\leq c(\|v^{2,\varrho}\|^2_{5,Q_{T_0}} + \|v^{2,\varrho}\|_{5,Q_{T_0}}\|v^{1}\|_{5,Q_{T_0}}+\|v^{1}\|^2_{5,Q_{T_0}})$$
If we assume that
\begin{equation}\label{smallness}\|v^{1}\|_{5,Q_{T_0}}\leq \frac 1{5c},\end{equation}
then it is not difficult to show that
$$\|v^{2,\varrho}\|_{5,Q_{T_0}}\leq\frac 12\|v^{1}\|_{5,Q_{T_0}}.$$
And the same bound is true  for the limit function. So,  $\|v^{2}\|_{5,Q_{T_0}}<\infty$. Condition (\ref{smallness}) gives an estimate on $T_0$. \end{proof}


The statement of Theorem  \ref{mainuniqueness}  follows immediately from Proposition \ref{uniqueness1}
and Corollary \ref{mild} reading that any weak $L_3$-solutions is a mild solution
on a short time interval.

\setcounter{equation}{0}
\section{Appendix}

The aim of this section is to give
an elementary proof of the existence of a mild solution to the Cauchy problem
(\ref{system})--(\ref{ic}). To this end, let us consider first the following Stokes problem
$$\partial_tw-\Delta w+\nabla r=-{\rm div}\,F,\qquad {\rm div}\,w=0$$
in $Q_\infty$ and
$$w(\cdot,0)=0$$
in $\mathbb R^3$.

Given $F\in C^\infty_0(Q_T;\mathbb R^{3\times 3})$, there exists a unique function $w$ such that $w\in C([0,T];L_3(\mathbb R^3))\cap L_5(Q_T)$ and $\sqrt{|w|}\nabla w\in L_2(Q_T)$ with the estimate
$$\|w\|_{3,\infty,Q_T}+\|w\|_{5,Q_T}\leq c\|F\|_{\frac 52,Q_T},$$
where $c>0$ is an absolute constant.
Moreover, $w$ can be expressed in the following way
$$w(x,t)=\int\limits^t_0\int\limits_{\mathbb R^3}K(x-y,t-\tau)F(y,\tau)dyd\tau.$$
So, we have the linear integral operator $\mathcal G:C^\infty_0(Q_T)\subset L_\frac 52(Q_T)\to L_\frac 52(Q_T)\cap C([0,T];L_3(\mathbb R^3))$. We denote by the same symbol the extension of this operator to the whole space $L_\frac 52(Q_T)$. We wish to show that
$$\mathcal GF(x,t)=\mathcal G_0F(x,t):=\int\limits^t_0\int\limits_{\mathbb R^3}K(x-y,t-\tau)F(y,\tau)dyd\tau$$
for $F\in L_\frac 52(\mathbb R^3)$. To this end, we first shall show that $\mathcal G_0: L_\frac 52(\mathbb R^3)\to
 L_4(\mathbb R^3)$ is bounded. We know, see for example \cite{KNSS2009}, that
 $$|K(x,t)|\leq K_0(x,t):=\frac c{(|x|^2+t)^2}.$$
Let
$$g(x,t):=\int\limits^t_0\int\limits_{\mathbb R^3}K_0(x-y,t-\tau)|F(y,\tau)|dyd\tau$$
and $$s=\frac {20}{17}.$$
Then we have
$$|K_0(x-y,t-\tau)F(y,\tau)|=|K_0(x-y,t-\tau)|^\frac {5}{17}|F(y,\tau)|^\frac 58\times|F(y,\tau)|^\frac 38\times $$$$\times|K_0(x-y,t-\tau)|^\frac{12}{17}.$$
By H\"older inequality, we find
$$g(x,t)\leq c\Big(\int\limits^t_0\int\limits_{\mathbb R^3}\frac{1}{(|x-y|^2+t-\tau)^\frac {40}{17}}|F(y,\tau)|^\frac 52dyd\tau\Big)^\frac 14\times$$
$$\times \Big(\int\limits^t_0\int\limits_{\mathbb R^3}|F(y,\tau)|^\frac 52dyd\tau\Big)^{\frac 25-\frac 14}\times
\Big(\int\limits^t_0\int\limits_{\mathbb R^3}\frac{1}{(|x-y|^2+t-\tau)^\frac {40}{17}}dyd\tau\Big)^\frac 35.$$
The last factor can be evaluated as follows:
$$\int\limits^t_0\int\limits_{\mathbb R^3}\frac{1}{(|x-y|^2+t-\tau)^\frac {40}{17}}dyd\tau=c\int\limits^t_0(t-\tau)^{-\frac {29}{34}}d\tau\int\limits^\infty_0\frac {r^2dr}{(r^2+1)^\frac {40}{17}}\leq C(T).$$
So, we have
$$\|g(\cdot,t)\|^4_{4,\mathbb R^3} \leq C(T)\|F\|_{\frac 52,Q_T}^\frac 32  \int\limits_{\mathbb R^3}dx\int\limits^t_0\int\limits_{\mathbb R^3}\frac{1}{(|x-y|^2+t-\tau)^\frac {40}{17}}|F(y,\tau)|^\frac 52dyd\tau = $$
$$=C(T)\|F\|_{\frac 52,Q_T}^\frac 32
\int\limits^t_0(t-\tau)^{-\frac {29}{34}}\int\limits_{\mathbb R^3}|F(y,\tau)|^\frac 52dyd\tau\leq $$
$$\leq C(T)\|F\|_{\frac 52,Q_T}^\frac 32
\int\limits^T_0|t-\tau|^{-\frac {29}{34}}\int\limits_{\mathbb R^3}|F(y,\tau)|^\frac 52dyd\tau.$$
Hence,
$$\|g(\cdot,t)\|^4_{4,Q_T}\leq C(T)\|F\|_{\frac 52,Q_T}^\frac 32
\int\limits^T_0\int\limits_{\mathbb R^3}|F(y,\tau)|^\frac 52dyd\tau \int\limits^T_0|t-\tau|^{-\frac {29}{34}}dt\leq$$
$$ \leq C(T)\|F\|_{\frac 52,Q_T}^4.$$

Next, our arguments are as follows. One can find a sequence $F^{(m)}\in C^\infty_0(Q_T)$ such that $F^{(m)}\to F$ in $L_\frac 52(Q_T)$ as $m\to\infty$. Let $w^{(m)}$ be a solution to the above Stokes system. It is easy to check that
$$w^{(m)}(x,t)=\int\limits^t_0\int\limits_{\mathbb R^3}K(x-y,t-\tau)F^{(m)}(y,\tau)dyd\tau.$$
As we know, the following estimate is valid:
$$\|w^{(m)}-w^{(k)}\|_{3,\infty,Q_T}+\|w^{(m)}-w^{(k)}\|_{5,Q_T}\leq c\|F^{(m)}-F^{(k)}\|_{\frac 52,Q_T}.$$
Let $w\in C([0,T];L_3(\mathbb R^3))\cap L_5(Q_T)$ be a limit function. It is a unique solution to the Stokes problem with the limit function $F$. We need to show that
$w=\mathcal G_0F$. Indeed,
$$\|\mathcal G_0F -\mathcal G_0F^{(m)}\|_{4,Q_T}=\|\mathcal G_0F -w^{(m)}\|_{4,Q_T}\leq C(T)\|F-F^{(m)}\|_{\frac 52,Q_T}.$$
Simply by interpolation, we can state that $w\in L_4(Q_T)$ and
$\|w-w^{(m)}\|_{4,Q_T}\to 0$ and, hence,
$$w=\mathcal G(F).$$

Now, our further arguments are quite standard. We let
$$V(\cdot,t)=\Gamma (\cdot,t)\star v_0(\cdot), \qquad \kappa(T)=\|V\|_{5,Q_T} $$
and let
$$v^{(k+1)}=V+\mathcal G(v^{(k)}\otimes v^{(k)})$$
with $v^{(0)}=0$. Using previous estimates, we have
$$\|v^{(k+1)}-V\|_{3,\infty,Q_T}+\|v^{(k+1)}-V\|_{5,Q_T}
\leq c\|v^{(k)}\|^2_{5,Q_T}.$$
Thus
$$\|v^{(k+1)}\|_{5,Q_T}\leq \|V\|_{5,Q_T}+c\|v^{(k)}\|^2_{5,Q_T}. $$ The  objective, now, is to show
$$\|v^{(k+1)}\|_{5,Q_T}\leq 2\kappa(T).$$
Arguing by induction, we arrive at the estimate
$$\|v^{(k+1)}\|_{5,Q_T}\leq c4\kappa^2(T)+\kappa(T)=\kappa(T)(4c\kappa(T)+1).$$
Let us impose the additional assumption
$$\kappa(T)\leq \frac 1 {16c}.$$
Later on, we shall show that it is possible. Now, assume that the above condition holds. We have $$v^{(k+1)}-v^{(k)}=\mathcal G(v^{(k)}\otimes v^{(k)}-v^{(k-1)}\otimes v^{(k-1)})$$
and thus
$$\|v^{(k+1)}-v^{(k)}\|_{3,\infty,Q_T}+\|v^{(k+1)}-v^{(k)}\|_{5,Q_T}\leq$$
$$\leq 2c\|v^{(k)}-v^{(k-1)}\|_{5,Q_T}(\|v^{(k)}\|_{5,Q_T}+\|v^{(k-1)}\|_{5,Q_T})\leq $$
$$\leq 8c\kappa(T)\|v^{(k)}-v^{(k-1)}\|_{5,Q_T}\leq \frac 12\|v^{(k)}-v^{(k-1)}\|_{5,Q_T}\leq \frac 14\|v^{(k-1)}-v^{(k-2)}\|_{5,Q_T}\leq$$ $$\leq \frac 1{2^{k-1}}\|v^{(1)}\|_{5,Q_T}= \frac 1{2^{k-1}}\kappa(T).$$
Therefore,
$$
\|v^{(k)}-v^{(m)}\|_{5,Q_T}\leq\sum\limits_{i=m}^{k-1}\|v^{(i+1)}-v^{(i)}\|_{5,Q_T}\leq $$
$$\leq \sum\limits_{i=m}^{k-1}\frac 1{2^{i}}\kappa(T).$$
So, $v^{(m)}\to v$ in $L_5(Q_T)$. Then
$$\|v^{(k)}-v^{(m)}\|_{3,\infty,Q_T}\leq 2c\|v^{(k)}-v^{(m)}\|_{5,Q_T}(\|v^{(k)}\|_{5,Q_T}+\|v^{(m)}\|_{5,Q_T})\leq $$$$\leq 8c\|v^{(k)}-v^{(m)}\|_{5,Q_T}\kappa(T).$$
This means that $v^{(m)}\to v$ in $C([0,T];L_3(\mathbb R^3))$. So, the existence of mild solution has been proven.
Uniqueness follows easily form the same arguments as above.
See the additional assumption when proving strong convergence of the whole sequence.

Now, going back to the assumption on the smallness of $\kappa(T)$, we have
$$\kappa(T)=I^1_\varrho+I^2_\varrho,$$
where
$$I^1_\varrho=\|(V)_\varrho\|_{5,Q_T}, \quad I^2_\varrho=
\|V-V_\varrho\|_{5,Q_T}, \quad V_\varrho(\cdot,t)=\Gamma(\cdot,t)\star (v_0)_\varrho(\cdot),$$
and $(v_0)_\varrho$ is a standard mollification of $v_0$.
With $I^2_\varrho$, we proceed as follows
$$I^2_\varrho\leq c\|v_0-(v_0)\varrho\|_{3,\mathbb R^3}\leq \frac 1{32c}$$
for some fixed $\varrho>0$.  Next,
$$\|(V)_\varrho(\cdot,t)\|_{5,\mathbb R^3}\leq ct^{-\frac 1r}\|(v_0)_\varrho\|_{4,\mathbb R^3},$$
where
$$\frac 1r=
\frac{3}{2}\Big(\frac 14-\frac 15\Big)=\frac{3}{40}.$$
Hence,
$$I^1_\varrho=\|(V)_\varrho\|_{5,Q_T}\leq c T^\frac {1}{8}\|(v_0)_\varrho\|_{4,\mathbb R^3}.$$
The right hand side of the latter inequality can be made small for a given $\varrho$ at the expense of $T$.

Now, I wish to show that the constructed above mild solution is in fact a weak $L_3$-solution in $Q_T$.
To this end we need to show that $w:=v-v^1\in L_{2,\infty}(Q_T)\cap W^{1,0}_2(Q_T)$ and satisfy the energy inequality.
We start with local energy inequality
$$I:=\frac 12\int\limits_{\mathbb R^3}\varphi^2(x) |w(x,t)|^2dx+\int\limits^t_0\int\limits_{\mathbb R^3}\varphi^2|\nabla w|^2dxds\leq $$
$$\leq \int\limits^t_0\int\limits_{\mathbb R^3}\Big(\frac 12 |w|^2\Delta \varphi^2+\frac 12w\cdot\nabla \varphi^2(|w|^2+2r)+$$$$+v^1\otimes w:\nabla w\varphi^2+v^1\otimes w:w\otimes \nabla\varphi^2+$$
$$+w\otimes v^1:\nabla w\varphi^2+w\otimes v^1:w\otimes \nabla\varphi^2+$$$$+v^1\otimes v^1:\nabla w\varphi^2+v^1\otimes v^1:w\otimes \nabla\varphi^2\Big)dxds$$
for any $\varphi\in C^\infty_0(\mathbb R^3)$. Here, the pressure $r$ is defined by the equation
$$\partial_tw-\Delta w+\nabla r=-{\rm div}\,v\otimes v$$ and we know
$$\|r\|_{\frac 52,Q_T}+\|r\|_{\frac 32,\infty,Q_T}<\infty.$$

Assuming that $0\leq \varphi\leq 1$, $\varphi(x)=1$, if $|x|<R$, $\varphi(x)=0$ if $|x|>2R$, and $|\nabla\varphi|\leq c/R$,  we  find
$$I\leq c\frac 1{R^2}\|w\|^2_{3,Q_T}R+c\frac 1R\|w\|^3_{3,Q_T}+c\frac 1R\|w\|_{3,Q_T}\|r\|_{\frac 32,Q_T}+$$$$+c(\|v^1\|_{4,Q_T}\|w\|_{4,Q_T}+\|v^1\|^2_{4,Q_T})I^\frac 12+$$$$+c\frac 1R(\|v^1\|_{3,Q_T}\|w\|^2_{3,Q_T}+\|v^1\|^2_{3,Q_T}\|w\|_{3,Q_T})$$
From the latter bound, we can easily get all the statements.
\begin{cor}\label{mild} Let be $v$ be a weak $L_3$-solution. There exists $T>0$ such that $v$ is a mild solution in $Q_T$.
\end{cor}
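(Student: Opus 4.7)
The plan is to construct, from the given initial data $v_0\in L_3(\mathbb R^3)$, a mild solution $u$ on a short time interval $Q_{T_0}$ using the Picard iteration of the Appendix, verify that $u$ is itself a weak $L_3$-solution, and then invoke Theorem \ref{uniqueness2} to identify $u$ with the given weak $L_3$-solution $v$.

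First, I would pick $T_0>0$ small enough that the smallness condition $\kappa(T_0)\le 1/(16c)$ from the Appendix iteration holds, using the splitting of $V=\Gamma\star v_0$ into a mollified and a residual part that appears at the end of the Appendix. The iteration then produces $u\in C([0,T_0];L_3(\mathbb R^3))\cap L_5(Q_{T_0})$ satisfying the integral identity of Definition \ref{Kato}, i.e.\ a mild solution on $Q_{T_0}$. Following the argument placed in the Appendix immediately before the corollary (the local energy inequality for $w=u-v^1$ derived from the Stokes problem $\partial_t w-\Delta w+\nabla r=-\mathrm{div}\,u\otimes u$, together with the pressure bounds $r\in L_{5/2}(Q_{T_0})\cap L_{3/2,\infty}(Q_{T_0})$ and a standard cutoff as in Section~2), I would upgrade $u$ to a weak $L_3$-solution with initial data $v_0$ in the sense of Definition \ref{weakL3sol}.

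With both $v$ and $u$ now weak $L_3$-solutions sharing the initial data $v_0$, the identification is immediate. Writing $u=v^1+u^2$, the linear Stokes estimate (\ref{1stStokesest}) gives $v^1\in L_5(Q_{T_0})$, hence $u^2=u-v^1\in L_5(Q_{T_0})$. Theorem \ref{uniqueness2} then forces $v=u$ on $Q_{T_0}$; in particular $v$ inherits the mild integral representation and lies in $C([0,T_0];L_3(\mathbb R^3))\cap L_5(Q_{T_0})$, which is precisely the statement that $v$ is a mild solution in $Q_{T_0}$, with $T:=T_0$.

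The delicate point I expect is the intermediate verification that the constructed mild solution $u$ fulfills every clause of Definition \ref{weakL3sol}, in particular the local and global energy inequalities and the pressure decomposition, since the Picard step and the uniqueness theorem are already available \emph{off the shelf}. Everything else reduces to the cutoff and pressure-decomposition bookkeeping already carried out in Section~2 and in the proof of Theorem \ref{AppendixtoUni1}; no genuinely new estimate is needed beyond those.
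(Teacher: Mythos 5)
Your proposal is correct and follows essentially the same route as the paper: construct the mild solution $u$ on a short interval via the Appendix iteration, use the local energy inequality argument preceding the corollary to see that $u$ is a weak $L_3$-solution, and then identify $v=u$ by Theorem \ref{uniqueness2} since $u^2=u-v^1\in L_5(Q_{T_0})$. The paper's own proof is exactly this three-step argument, stated more tersely.
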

Indeed, we know that there exist a mild solution $u$ in $Q_T$ for some $T>0$ depending on $v_0$. By the previous observation, it is a weak $L_3$-solution in $Q_T$. By the uniqueness theorem, $v=u$.


\begin{thebibliography}{99}


\bibitem {CKN}
Caffarelli, L., Kohn, R.-V., Nirenberg, L., \emph{Partial regularity of
suitable weak solutions of the Navier-Stokes equations}, Comm. Pure
Appl. Math., Vol. XXXV (1982), pp. 771--831.


\bibitem{Calderon}
Calderón, C.\ P., Existence of weak solutions for the Navier-Stokes equations with initial data in Lp. Trans. Amer. Math. Soc. 318 (1990), no. 1, 179--200.
\bibitem{Cannone}
 Cannone, M., Harmonic analysis tools for solving the incompressible Navier-Stokes equations.  Handbook of mathematical fluid dynamics. Vol. III,  161--244, North-Holland, Amsterdam, 2004.








\bibitem{ESS2003}
Escauriaza, L.; Seregin, G.; \v Sver\'ak, V. $L_{3,\infty}$-solutions of Navier-Stokes equations and backward uniqueness. (Russian) Uspekhi Mat. Nauk 58 (2003), no. 2(350), 3--44; translation in Russian Math. Surveys 58 (2003), no. 2, 211--250.

\bibitem{Galdi}
 Galdi, Giovanni P., An introduction to the Navier-Stokes initial-boundary value problem.  Fundamental directions in mathematical fluid mechanics,  1--70, Adv. Math. Fluid Mech., Birkhäuser, Basel, 2000.




\bibitem{Giga1986}

Giga, Y., Solutions for semilinear parabolic equations in $L_p$ and regularity of weak solutions of the Navier– Stokes system, J. Differential Equations, 62 (1986), pp. 186--212.

\bibitem{Kato}
 Kato, T., Strong $L^p$-solutions of the Navier-Stokes equation in $R^m$, with applications to weak solutions. Math. Z.  187  (1984),  no. 4, 471--480.

\bibitem{JiaSverak} Jia, H., Sverak, V., Are the incompressible 3d Navier-Stokes equations locally ill-posed in the natural energy space? J. Funct. Anal. 268 (2015), no. 12, 3734--3766.


\bibitem{KS}
Kikuchi, N., Seregin, G.,Weak solutions to the Cauchy problem for
the Navier-Stokes
 equations satisfying the local energy inequality, AMS translations, Series 2,
 Volume 220, pp.  141-164.


 \bibitem{KNSS2009}
  Koch, G., Nadirashvili, N., Seregin,  A., \v Sver\'ak, V., Liouville theorems for the Navier-Stokes equations and applications. Acta Math.  203  (2009),  no. 1, 83--105.


\bibitem {KochSol}
Koch, H., Solonnikov, V. A.,  \emph{$L_p$-estimates for a solution to the nonstationary Stokes equations}. Function theory and phase transitions. J. Math. Sci. (New York) 106 (2001), no. 3, 3042--3072.


\bibitem{KozSohr}
Kozono, H,; Sohr, H., Remark on uniqueness of weak solutions to the Navier-Stokes equations. Analysis 16 (1996), no. 3, 255--271.

\bibitem{KochTataru} Koch, H., Tataru, D., \emph{Well-posedness for the Navier-Stokes equations}, Adv. Math. 157 (2001), no. 1, 22-35.
    
\bibitem{Ladyzhenskaya}
Ladyzhenskaya, O. A. The mathematical theory of viscous incompressible flow. Second English edition, revised and enlarged. Translated from the Russian by Richard A. Silverman and John Chu. Mathematics and its Applications, Vol. 2 Gordon and Breach, Science Publishers, New York-London-Paris 1969 xviii+224 pp.

\bibitem {LSU}
Ladyzhenskaya, O. A., Solonnikov, V. A., Uralt'seva, N. N., Linear
and quasi-linear equations of parabolic type, Moscow, 1967;
English translation, American Math. Soc., Providence 1968.






\bibitem {LR1}

Lemarie-Rieusset, P. G., Recent developemnets in the Navier-Stokes
problem, Chapman\&Hall/CRC reseacrh notes in mathematics series,
431.







\bibitem {Le}
J. Leray, \emph{ Sur le mouvement d'un liquide visqueux emplissant
l'espace}, Acta Math. \textbf{63} (1934),  193--248.

\bibitem{Oseen1911} Oseen, C. W., Sur let formules de Green g\'en\'eralis\'ees qui se pr\'esentent dans l'hydrodynamique et sur quelques-unes de leus application, Acta Math.\ 34 (1911), no. 1, 205--284.

\bibitem{RusSve}
Rusin, W.,  \v Sver\'ak, V. Minimal initial data for potential Navier--Stokes singularities. J. Funct. Anal.  260  (2011),  no. 3, 879–891.

\bibitem {S3}
Seregin, G.A.,\emph{ Local regularity of suitable weak solutions to the
Navier-Stokes equations near the boundary},  J. math. fluid mech.,
4(2002), no.1,1--29.


\bibitem{S2005}
Seregin, G. A., \emph{ On smoothness of $L_{3,\infty}$-solutions to the Navier-Stokes
equations up to boundary}, Mathematische
Annalen, 332(2005), pp. 219-238.



\bibitem{S4} Seregin, G.,   \emph{ A note on local boundary regularity for the Stokes system}, Zapiski Nauchn. Seminar., POMI, 370 (2009), pp. 151-159.

\bibitem {S6}
Seregin, G., A note on necessary conditions for blow-up of energy solutions to the Navier-Stokes equations, Progress in Nonlinear Differential Equations
and Their Applications, 2011 Springer Basel AG, Vol. 60, 631--645.


\bibitem {S7}
Seregin, G., Necessary conditions of potential blow up for the Navier-Stokes equations, Zapiski Nauchn.Seminar. POMI, 385(2010), 187-199.

\bibitem{Ser2012}
 Seregin, G., \emph {A certain necessary condition of potential blow up for Navier-Stokes equations}, Comm. Math. Phys. 312 (2012), no. 3, 833--845.


 \bibitem{mybook}
 Seregin, G., Lecture notes on regularity theory for the Navier-Stokes equations. World Scientific Publishing Co. Pte. Ltd., Hackensack, NJ, 2015. x+258 pp. ISBN: 978-981-4623-40-7




\bibitem {Sol1973}
Solonnikov, V. A.,\emph{ Estimates of solutions to the non-stationary
Navier-Stokes system}, Zapiski Nauchn. Seminar. LOMI 28(1973),
153--231.


\bibitem{Sol2003UMN}
Solonnikov, V. A. Estimates for solutions of the nonstationary Stokes problem in anisotropic Sobolev spaces and estimates for the resolvent of the Stokes operator. (Russian) Uspekhi Mat. Nauk 58 (2003), no. 2(350), 123--156; translation in Russian Math. Surveys 58 (2003), no. 2, 331-365



\end{thebibliography}
\end{document}